\theoremstyle{plain} \numberwithin{equation}{section}
\newcounter{dummy} 
\numberwithin{dummy}{section}
\newtheorem{theorem}[dummy]{Theorem}
\newtheorem{lemma}[dummy]{Lemma}
\newtheorem{cor}[dummy]{Corollary}
\newtheorem{proposition}[dummy]{Proposition}
\newtheorem{example}[dummy]{Example}
\newtheorem{conjecture}[dummy]{Conjecture}
\newtheorem{fact}[dummy]{Fact}
\newtheorem*{notation*}{Notation}
\newtheorem{definition}[dummy]{Definition}
\newcommand{\Mod}[1]{\ (\mathrm{mod}\ #1)}
\DeclareMathOperator{\Pal}{Pal}
\newtheorem*{rep@theorem}{\rep@title}
\newcommand{\newreptheorem}[2]{%
	\newenvironment{rep#1}[1]{%
		\def\rep@title{#2 \ref{##1}}%
		\begin{rep@theorem}}%
		{\end{rep@theorem}}}
\providecommand\@dotsep{5}
\renewcommand{\listoftodos}[1][\@todonotes@todolistname]{%
  \@starttoc{tdo}{#1}}
\begin{document}
\author{Ian M. Banfield}
\address{Mathematisches Institut, Universit\"at Bern, Siedlerstrasse 5, Bern, CH-3012}
\email{ian.matthew.banfield@gmail.com}
\keywords{Alexander Polynomial; log-concave; Christoffel word; two bridge knot}
\subjclass[2020]{57K10, 68R15}
\title[Christoffel words and the strong Fox conjecture]{Christoffel words and the strong Fox conjecture for two-bridge knots}
\markboth{Ian M. Banfield}{The Strong Fox Conjecture, Two-Bridge Knots and Christoffel words}
\begin{abstract}
	The trapezoidal Fox conjecture states that the coefficient sequence of the Alexander polynomial of an alternating knot is unimodal. We are motivated by a harder question, the strong Fox conjecture, which asks whether the coefficient sequence of the Alexander polynomial of alternating knots is actually log-concave. Our approach is to introduce a polynomial $\Delta(t)$ associated to a Christoffel word and to prove that its coefficient sequence is log-concave. This implies the strong Fox conjecture for two-bridge knots.
\end{abstract}
\maketitle


\section{Introduction}\label{sec:introduction}
In 1962, Fox asked if the absolute values of the coefficients of the Alexander polynomial $\triangle_K(t) = \sum a_i t^i$  of alternating knots $K \subset S^3$ are unimodal , i.e. if
$$|a_0| \leq |a_1| \leq \dots \leq |a_k| \geq |a_{k+1}| \geq \dots \geq |a_n|.$$
This conjecture has become known as Fox's trapezoidal conjecture. \footnote{Unimodal sequences are also known as trapezoidal sequences though we prefer the more commonly used term ``unimodal''.} Work on this conjecture spans over half a century. It has been confirmed for a number of classes of alternating knots, using techniques ranging from combinatorics to Heegard-Floer theory, see \cite{MR2628154,MR550968, MR802722, MR1988285, https://doi.org/10.48550/arxiv.1307.1578, MR4206898}.

A sequence $(a_n)$ of non-negative real numbers is \textbf{log-concave} if $a_{i-1} a_{i+1} \leq a_i^2$ and a polynomial $f(t) = \sum a_i t^i$ is log-concave if its coefficient sequence $(a_i)$ is log-concave. Establishing log-concavity can be challenging. Indeed, the celebrated recent breakthrough work by Huh and his collaborators on the resolution of the Heron-Rota-Welsh conjecture \cite{MR2904577, MR2983081, MR3286530, MR3862944} - establishing that the characteristic polynomial of a matroid is log-concave - followed a half century of failed attempts. Log-concave sequences are ubiquitious in graph theory and combinatorics \cite{MR1110850}. Examples include real-rooted polynomials, matching and chromatic polynomials, and the binomial coefficients, amongst many others.

Log-concave sequences are unimodal. This naturally suggests the following strengthening of Fox's conjecture, which we call the \textbf{strong Fox conjecture}.

\begin{conjecture}
	Let $K \subset S^3$ be an alternating knot. Then the Alexander polynomial $\Delta_K(t) = \sum a_i t^i$ is log-concave.
\end{conjecture}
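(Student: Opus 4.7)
The plan is to extend the Christoffel word machinery beyond two-bridge knots by attaching to every reduced alternating diagram a combinatorial model of $\Delta_K(t)$ whose log-concavity can be established via an ambient multivariate structure. The starting point is the Crowell--Kauffman state sum $\Delta_K(t) = \sum_S \epsilon(S)\, t^{w(S)}$ over Kauffman states of a reduced alternating diagram $D$: because $D$ is alternating, every $\epsilon(S)$ carries the same sign (up to an overall unit), so each $|a_i|$ literally counts states of weight $i$. Log-concavity of $(|a_i|)$ thus becomes a purely combinatorial statement about a generating function attached to the checkerboard Tait graph $G$ of $D$.

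First I would introduce a multivariate refinement $P_G(\mathbf{x})$ of this state generating function, with variables tracking the local contribution of each edge of $G$. The target is to show that $P_G$ is a Lorentzian polynomial in the sense of Br\"and\'en--Huh; univariate log-concavity of $\Delta_K(t)$ would then follow from the general specialization theorem. Two base cases ought to fall out cleanly: when $G$ is a path, $P_G$ reduces to the Christoffel polynomial $\Delta(t)$ whose log-concavity is the main theorem of this paper, recovering the two-bridge case; when $G$ is an $n$-cycle, one recovers the Alexander polynomial of a $(2,n)$-torus link, which is explicitly tractable. The inductive step would combine edge-by-edge deletion--contraction, modelled on the Alexander skein and the graph operations dual to flypes, with the closure properties of Lorentzian polynomials.

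The main obstacle is exactly this inductive step. For two-bridge knots the Tait graph is a chain and only series/parallel operations occur, which Christoffel combinatorics controls completely; for general alternating knots $G$ is an arbitrary connected plane graph, and the relevant state-count polynomial need not be the basis generating function of any matroid, so the matroid Hodge theory of Huh and collaborators does not apply off the shelf. A realistic fallback is to trade generality for structure: first handle Montesinos knots by proving that tangle addition of rational tangles preserves the Lorentzian property, pushing the Christoffel calculus through Conway's construction; then alternating algebraic knots in Conway's sense; and only then the remaining alternating knots via their prime tangle decomposition. Alternatively, one could categorify: by Ozsv\'ath--Szab\'o, $\widehat{HFK}$ of an alternating knot is determined by $\Delta_K(t)$ and the signature, so the conjecture becomes a log-concavity statement about the ranks of $\widehat{HFK}$ in Alexander grading, where an explicit injection of generators might be easier to construct than a purely polynomial inequality.
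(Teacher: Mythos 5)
You should first note that the statement you are attacking is presented in the paper as a \emph{conjecture}: the paper proves it only for two-bridge knots (Corollary \ref{cor:strong_fox_for_two_bridge}), by identifying $\Delta_K(t)$ up to signs with the polynomial $\Delta_w(t)$ of a Christoffel word and proving the latter log-concave. Your proposal does not prove the general statement either; it is a research plan whose central step is missing. The opening reduction is sound as far as it goes: by Crowell's theorem the state sum of a reduced alternating diagram has sign-coherent contributions, so $|a_i|$ does count states of Alexander weight $i$. But that observation merely restates the conjecture as ``the weight generating function of states is log-concave'' and buys nothing by itself. Everything then rests on the claim that a multivariate refinement $P_G(\mathbf{x})$ of this generating function is Lorentzian, and that claim is exactly where the proof would have to live. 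You give no definition of $P_G$, no verification that its support is M-convex (a prerequisite for being Lorentzian, and already nontrivial here since, as you yourself concede, the state count need not be the basis generating function of a matroid), and no proof that the proposed deletion--contraction recursion preserves the Lorentzian property: the Br\"and\'en--Huh closure theorems cover products, directional derivatives and nonnegative linear substitutions, whereas a skein-type recursion $P_G = P_{G-e} + (\cdots)P_{G/e}$ involves a \emph{sum}, and controlling sums is precisely the hard point --- it is why the paper has to import the synchronicity and ratio-dominance machinery of Gross--Mansour--Tucker--Wang rather than quote a general closure theorem. Your base cases are also off: the Tait graph of a general two-bridge knot is a series--parallel graph determined by the continued fraction expansion of $q/p$, not a path, so even the claim that the path case ``reduces to the Christoffel polynomial'' needs an argument.

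In short, what you have written is a program with an honestly acknowledged open inductive step, not a proof, and it should not be presented as resolving the conjecture. If you want a checkable sub-goal consistent with the paper's actual methods, your Montesinos fallback is the right place to start: express the state generating function of a sum of rational tangles in terms of the polynomials $\Delta_w(t)$ and $G_w(t)$ of the constituent Christoffel words and attempt to verify the hypotheses of Facts \ref{fact:sum_of_synced_is_log_concave} and \ref{fact:convolution_ratio_dominance_and_lexiocographic} for the resulting sum; that is where the two-bridge argument will either extend or visibly break.
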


\begin{figure}[h]
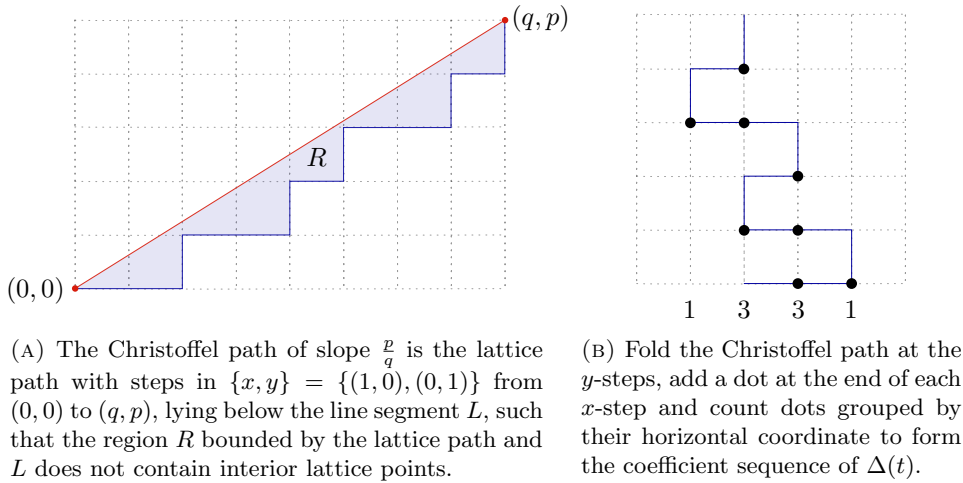

	\setbox0=\hbox{\includesvg{figures/Christoffel_word_and_polynomial_example_06_Folded_Christoffel_path_large.svg}}%
	\subcaptionbox{The Christoffel path of slope $\frac{p}{q}$ is the lattice path with steps in $\{x,y\} = \{(1,0), (0,1)\}$ from $(0,0)$ to $(q,p)$, lying below the line segment $L$, such that the region $R$ bounded by the lattice path and $L$ does not contain interior lattice points. \label{subfig:christoffel_intro_path}}[.56\textwidth]{
		\raisebox{\dimexpr\ht0-\height}{\includesvg{figures/Christoffel_word_and_polynomial_example_05_Christoffel_path,_large.svg}}
	}\hfill%
	\subcaptionbox{Fold the Christoffel path at the $y$-steps, add a dot at the end of each $x$-step and count dots grouped by their horizontal coordinate to form the coefficient sequence of $\Delta(t)$.\label{subfig:christoffel_intro_folded_path_and_delta}}[.40\textwidth]{
		\raisebox{\dimexpr\ht0-\height}{\includesvg{figures/Christoffel_word_and_polynomial_example_06_Folded_Christoffel_path_large.svg}}
	}%
	\caption{The word $w$ encoding the Christoffel path is called the Christoffel word. For $(p,q) = (5,8)$, $w = xxyxxyxyxxyxy$ with polynomial $\Delta_w(t) = t^{-1} + 3 + 3t + t^2$, which is log-concave.}
	\label{figure:christoffel_word_intro}
\end{figure}

Our approach to the strong Fox conjecture is through the theory of Christoffel words \cite{berstelcombinatorics}. Each pair $(p,q)$ of coprime non-negative integers determines a word $w$ on the alphabet $\{x,y\}$, called the Christoffel word of slope $\frac{p}{q}$. The word $w$ encodes a lattice path, see Figure $\ref{subfig:christoffel_intro_path}$. We define a polynomial $\Delta_w(t)$ by decorating this path with dots at the terminal point of every $x$-step, folding it at the $y$-steps, and then read off the coefficient sequence of $\Delta_w(t)$ by counting dots according to their horizontal coordinate as pictured in Figure \ref{subfig:christoffel_intro_folded_path_and_delta}.

Christophe Reutenauer discovered that the polynomial $\Delta_w(t)$ can be recovered through the following construction. Consider the mapping $\psi$ to the multiplicative monoid of $3 \times 3$ matrices (with entries Laurent polynomials over $\mathbb{Z}$), defined by
\begin{equation*}
	x \xmapsto{\psi} \begin{pmatrix}
		t & 0 & t \\
		0 & t^{-1} & t^{-1} \\
		0 & 0 & 1
	\end{pmatrix}
	\mbox{~and~}
	y \xmapsto{\psi}  \begin{pmatrix}
		0 & 1 & 0 \\
		1 & 0 & 0 \\
		0 & 0 & 1
	\end{pmatrix}.
\end{equation*}

\begin{reptheorem}{thm:delta_via_matrices}
	The entry in position $(1,3)$ of the matrix $\psi(w)$ is $\Delta_w(t)$.
\end{reptheorem}

The main result of this article is Theorem \ref{thm:maintheorem}. To prove it, we use recent work by Gross-Mansour-Tucker-Wang \cite{MR3355766} on combinations of log-concave sequences.

\begin{reptheorem}{thm:maintheorem}
	Let $w$ be a Christoffel word. The polynomial $\Delta_w(t)$ is log-concave.
\end{reptheorem}

This provides a positive answer to the strong Fox conjecture for a well-known and frequently studied class of alternating knots, namely knots with bridge number two. A bridge in a knot diagram is an arc with at least one overcrossing and the bridge number of a knot is defined to be the minimal number of bridges in any diagram. A diagram of a two-bridge knot, exhibiting the two bridges, is given in Figure \ref{fig:two_bridge_knot_p=5,q=8}.

\begin{figure}[h]
	\centering
	\includesvg[scale=0.75]{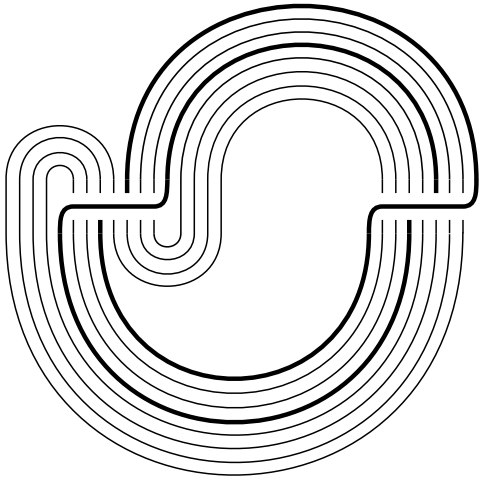}
	\caption{A two-bridge knot, corresponding to $(5,8)$ in Schubert's normal form. The two bridges are indicated in heavy lines.}
	\label{fig:two_bridge_knot_p=5,q=8}
\end{figure}

Inspired by Hoste's interpretation \cite{MR4143690} of a classic result by Minkus \cite{MR643587}, describing how the Alexander polynomial can be computed via an integer walk, we prove an equivalent result for the polynomial $\Delta_w(t)$. This establishes a relation between our polynomial $\Delta_w(t)$ and the Alexander polynomials of two-bridge knots.

\begin{reptheorem}{thm:two_bridge_alex_is_delta_of_christoffel}
	Let $K$ be a two-bridge knot. There exists a Christoffel word $w$ satisfying $\Delta_K(t) = \varsigma(\Delta_w(t))$, where $\varsigma$ is the sign-alternation operator on Laurent polynomials, defined by $t^n \xmapsto{\varsigma} (-1)^n t^n$ and extended linearly.
\end{reptheorem}

Our proof is constructive. Schubert's normal form gives a classification of two-bridge knots by pairs of coprime integers $(p,q)$ satisfying $p$ odd and $0 < p < q$ \cite{MR1417494} and the word $w$ satisfying Theorem \ref{thm:two_bridge_alex_is_delta_of_christoffel} is none other than the Christoffel word of slope $\frac{p}{q}$. The two-bridge knot $K$ in Figure \ref{fig:two_bridge_knot_p=5,q=8} and the Christoffel word from Figure \ref{subfig:christoffel_intro_path} both correspond to $(p,q) = (5,8)$ in their respective constructions, and therefore $\Delta_K(t) = \varsigma(\Delta_w(t)) = -t^{-1} + 3 - 3t + t^2$. 

\begin{repcor}{cor:strong_fox_for_two_bridge}
	The strong Fox conjecture is true for two-bridge knots.
\end{repcor}

\subsection*{Outline of this article}
In Section \ref{sec:christoffel}, we define Christoffel words and discuss a characterization of Christoffel words as palindromic closures. In Section \ref{sec:log_concave} we recall two relations on log-concave sequences, the synchronicity and ratio-dominance relations of \cite{MR3355766}. The key property of these relations is that sums of related sequences are log-concave. The definition of $\Delta_w(t)$ and the proof of Theorem \ref{thm:maintheorem} is explained in Section \ref{sec:alexander_polynomial_log_concave}. We end by explaining how Theorem \ref{thm:maintheorem} implies the log-concavity of the Alexander polynomial of two-bridge knots in Section \ref{sec:strongfox}.


\section{Christoffel words}
\label{sec:christoffel}
\subsection{Definitions and Notations}
We follow the conventions from \cite{berstelcombinatorics}. For convenience, we start by briefly recalling the key definitions used in this article.

An \textbf{alphabet} $A$ is a finite set of symbols. The elements of $A$ are called \textbf{letters}. A \textbf{word} over the alphabet $A$ is an element in the free monoid $A^*$ generated by the alphabet $A$. The identity element of $A^*$ is called the \textbf{empty word} and denoted $\epsilon$.

Let $w \in A^*$ be a word. Then there exist unique $a_1, \dots a_r \in A$ and a unique $r \geq 0$ such that $w = a_1 a_2 \dots a_r$. The integer $r$ is called the \textbf{length} of the word $w$. A \textbf{factor} of $w$ is a word $v = a_i a_{i+1} \cdots a_j$ for $1 \leq i \leq j \leq r$ and a \textbf{prefix} of $w$ is a factor of the form $v = a_1 \cdots a_j$ for $1 \leq j$. The \textbf{reversal} of $w$ is the word $\widetilde{w} = a_r a_{r-1} \dots a_1$. A \textbf{palindrome} is a word $w \in A^*$ that equals its reversal.

Let $A,B$ be two alphabets. A \textbf{morphism} is a map $f : A^* \to B^*$ satisfying $f(uv) = f(u)f(v)$ for all $u,v \in A^*$, i.e. a morphism in the category of free monoids.

\subsection{Christoffel words}

\begin{definition}
	Let $p,q$ be non-negative, coprime integers. The Christoffel path of slope $\frac{p}{q}$ is the lattice path in the integer lattice $\mathbb{Z}^2$ from $(0,0)$ to $(p,q)$ with steps $x = (1,0)$ and $y = (0,1)$ satisfying the following two conditions.
	\begin{enumerate}
		\item The lattice path lies below the line segment $L$ with endpoints at $(0,0)$ and $(p,q)$ and,
		\item the region bordered by the line segment $L$ and the lattice path does not contain interior lattice points.
	\end{enumerate}
	The Christoffel path is encoded by a word $w \in \{x,y\}^*$, the so-called Christoffel word.
\end{definition}

A Christoffel word is trivial if $w = x$ or $w = y$. In the rest of the article, by Christoffel word we mean a non-trivial Christoffel word. Christoffel words admit a number of interesting characterizations. We will only discuss the palindromic characterization and refer to \cite{berstelcombinatorics} for a more detailed discussion of Christoffel words and their various characterizations and further references. In order to state the palindromic characterization we first explain the so-called iterated palindromic closure.

\begin{definition}[\cite{MR1468450}]
	\label{def:palindromic_closure}
	Suppose $w = uv \in \{x,y\}^*$, where $v$ is the longest suffix of $w$ that is a palindrome. Let $w^{+} = w\widetilde{u}$, the so-called right palindromic closure of $w$. The iterated palindromic closure is the operation $\Pal : \{x,y\}^* \to \{x,y\}^*$, defined recursively by $\Pal(\epsilon) = \epsilon$ and $\Pal(zv) = (\Pal(v)z)^{+}$ for $z \in \{x,y\}$. 
\end{definition}

The pictorial interpretation of the iterated palindromic closure is as follows. The letter $z \in \{x,y\}$ acts on the image of $\Pal$ by increasing the length of the horizontal (if $z = x$) or vertical (if $z = y$) segments of the path corresponding to the word $u \in \Pal(\{x,y\}^*)$ by one, see Figure \ref{fig:action_of_x}. This extends to a monoid action of $\{x,y\}^*$ on the image $\Pal(\{x,y\}^*)$ by defining
\begin{equation}
	w \cdot \Pal(v) \coloneqq \Pal(wv).
\end{equation}

An immediate implication of this interpretation is the following algorithm to calculate the inverse. Let $u \in \Pal(\{x,y\}^*)$. The path corresponding to $u$ starts with either $n$ horizontal (vertical) segments; shorten all horizontal (vertical) segments by $n$ and note down $x^n$ ($y^n$). Continue until the path is trivial. The word noted down in this process is the inverse of $u$.

\begin{figure}
	\centering
	\includesvg{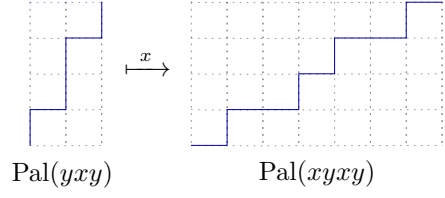}
	\caption{The action of $x$ on $\Pal(\{x,y\}^*)$: The lengths of all horizontal segments, including those of length 0, are increased by 1. The action of $y$ similarly lengthens vertical segments.}
	\label{fig:action_of_x}
\end{figure}

\begin{fact}[\cite{berstelcombinatorics}, Proposition 4.9, Proposition 4.14]
	\label{fact:palindromic_char_of_Christoffel}
	 A word $w \in \{x,y\}^*$ is a Christoffel word if and only if $w = x\Pal(v)y$ for some $v \in \{x,y\}^*$.
\end{fact}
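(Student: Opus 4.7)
The plan is to prove both implications by induction, leveraging the Stern--Brocot recursive structure on Christoffel words on one side, and the recursive definition of $\Pal$ on the other. I would begin by establishing two structural observations about a non-trivial Christoffel word $w$ of slope $p/q$. First, $w$ necessarily begins with $x$ and ends with $y$: an initial $y$-step would immediately exit the half-plane below $L$, and dually for the final step. Second, the Christoffel path is centrally symmetric --- rotation by $180°$ about the midpoint of $L$ maps the path to itself. This follows from uniqueness, since the rotated path is also a lattice path from $(0,0)$ to $(q,p)$ staying below $L$ with no interior lattice points in the enclosed region, so it must coincide with the original. Together these give $w = xuy$ with $u$ a palindrome.

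Next, I would show by strong induction on $p+q$ that this middle palindrome $u$ lies in the image of $\Pal$. The base case $p = q = 1$ yields $w = xy$ with $u = \epsilon = \Pal(\epsilon)$. For the inductive step, I would use the Stern--Brocot recursion: the Christoffel word of slope $p/q$ decomposes as the concatenation of the Christoffel words of its two Stern--Brocot parents, a well-known fact whose path-level interpretation is the subdivision of the region $R$. Translating this to the language of $\Pal$, the passage from a parent slope to $p/q$ must correspond to prepending a single letter $z \in \{x,y\}$ to the argument of $\Pal$. The pictorial description of the action of $z$ given after Definition \ref{def:palindromic_closure} --- lengthening all horizontal or vertical segments by one --- matches exactly the effect on the Christoffel path of moving one step along the Stern--Brocot tree, which closes the induction.

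For the reverse direction, I would induct on $|v|$. When $v = \epsilon$, $x\Pal(\epsilon)y = xy$ is the Christoffel word of slope $1/1$. For the inductive step write $v = zv'$, so that $\Pal(v) = \Pal(v') \cdot z$ in the monoid action sense. By induction $x\Pal(v')y$ is a Christoffel word, and I would verify that the action of $z$ produces a lattice path which still lies below its chord, still encloses no interior lattice points, and still terminates at a coprime endpoint --- and is therefore again a Christoffel word, of slope equal to the appropriate Stern--Brocot child.

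The main obstacle will be setting up the precise dictionary between the geometric Stern--Brocot operation on Christoffel paths and the combinatorial letter-action on $\Pal(v)$. Once one proves the bookkeeping lemma that the $x$- and $y$-actions described around Figure \ref{fig:action_of_x} correspond, respectively, to the left and right Stern--Brocot descent at slope $p/q$ --- including the fact that endpoint coordinates and the coprimality condition are preserved and that no interior lattice points are created --- both implications fall out of the same induction. This correspondence is the heart of the argument; the rest is routine verification.
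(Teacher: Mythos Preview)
The paper does not prove this statement at all: it is labeled as a \emph{Fact} and simply cited from \cite{berstelcombinatorics} (Propositions 4.9 and 4.14), with no argument given. So there is no ``paper's own proof'' to compare against.

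That said, your outline is essentially the standard argument from the cited reference. The central symmetry of the Christoffel path is exactly how one shows $w = xuy$ with $u$ a palindrome, and the Stern--Brocot recursion is the usual mechanism for reducing to shorter Christoffel words. One caution: your claimed dictionary --- that the $x$- and $y$-actions on $\Pal(v)$ correspond to left/right Stern--Brocot descent --- is correct but is not quite as immediate as you suggest. The Stern--Brocot step concatenates the two parent Christoffel words, whereas the $z$-action lengthens all segments of one orientation uniformly; showing these agree requires knowing that the shorter parent word appears as the ``gap pattern'' in the longer one, which is itself a nontrivial property of Christoffel words. In \cite{berstelcombinatorics} this is handled via the standard factorization and the morphisms $G, \widetilde{G}, D, \widetilde{D}$ rather than directly via the segment-lengthening picture. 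Your approach would work, but the bookkeeping lemma you flag as ``the heart of the argument'' genuinely is the heart, and proving it cleanly amounts to reproving a good chunk of the Christoffel--Stern--Brocot correspondence.
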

We refer to $u = \Pal(v)$ in a Christoffel word $w = xuy$ as the \textbf{palindromic factor} of $w$. Fact \ref{fact:palindromic_char_of_Christoffel} allows for ``extending'' the monoid action of $\{x,y\}^*$ from iterated palindromic closures to Christoffel words via
\begin{equation}
	\label{eq:monoid_action_on_christoffel}
	w \cdot x\Pal(v)y \coloneqq x(w \cdot \Pal(v))y
\end{equation}
\begin{example}
	The Christoffel word of slope $\frac{5}{8}$ is
	\begin{align*}
		w &= x(xyxxyxyxxyx)y = x\Pal(xyxy)y. \\
		\intertext{Acting by $yx$ on $w$ gives}
		(yx) \cdot w &= x\left[ (yx) \cdot \Pal(xyxy) \right]y \\
						&= x\left[ y \cdot (xxyxxxyxxyxxxyxx) \right]y \\
						&= x(yxyxyyxyxyxyyxyxyyxyxyxyyxyxy)y.
	\end{align*}
\end{example}

\section{The synchronicity and ratio-dominance relations for log-concave sequences}
\label{sec:log_concave}
In \cite{MR3355766}, Gross-Mansour-Tucker-Wang introduced two relations on log-concave sequences to prove log-concavity of the genus distributions of some graphs. We will briefly recall these relations and various facts that we use to prove Theorem \ref{thm:maintheorem}.
\begin{definition}
	\label{def:synced_sequences}
	Two non-negative, log-concave sequences $A = (a_n)$ and $B = (b_n)$ are synchronized, denoted $A \sim B$, if $a_{i-1} b_{i+1} \leq a_i b_i$ and $a_{i+1} b_{i-1} \leq a_i b_i$ for all $i$. 
\end{definition}
The synchronicity relation is reflexive and symmetric, but not transitive, so in particular it is not an equivalence relation. The key property of the synchronicity relation is its behavior under summation, as the following two facts show.
\begin{fact}[\cite{MR3355766}, Theorem 2.2]
	\label{fact:sum_of_synced_is_log_concave}
	Let $A$ and $B$ be synchronized sequences and let $u,v \geq 0$. The sequence $uA + vB$ is log-concave.
\end{fact}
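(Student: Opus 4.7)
The plan is a direct term-by-term comparison. Setting $c_n = u a_n + v b_n$, I would expand both sides of the desired inequality $c_{i-1}c_{i+1} \leq c_i^2$ as quadratic forms in $u$ and $v$:
\begin{align*}
c_i^2 &= u^2 a_i^2 + 2uv\, a_i b_i + v^2 b_i^2, \\
c_{i-1} c_{i+1} &= u^2 a_{i-1} a_{i+1} + uv(a_{i-1} b_{i+1} + a_{i+1} b_{i-1}) + v^2 b_{i-1} b_{i+1}.
\end{align*}

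Next I would bound the three coefficients separately. The $u^2$-coefficient inequality $a_{i-1} a_{i+1} \leq a_i^2$ is exactly log-concavity of $A$, and the $v^2$-coefficient inequality $b_{i-1} b_{i+1} \leq b_i^2$ is exactly log-concavity of $B$. The two synchronicity inequalities $a_{i-1} b_{i+1} \leq a_i b_i$ and $a_{i+1} b_{i-1} \leq a_i b_i$ from Definition \ref{def:synced_sequences} add to give exactly the $uv$-coefficient inequality $a_{i-1} b_{i+1} + a_{i+1} b_{i-1} \leq 2 a_i b_i$.

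Since $u, v \geq 0$, the weights $u^2$, $uv$, $v^2$ are all non-negative, so I can combine the three coefficient-level inequalities linearly to conclude $c_{i-1} c_{i+1} \leq c_i^2$. Non-negativity of $(c_n)$ is immediate from non-negativity of $A$, $B$, $u$, $v$.

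There is no real obstacle here: the synchronicity relation in Definition \ref{def:synced_sequences} is engineered precisely so that the cross term $uv(a_{i-1}b_{i+1}+a_{i+1}b_{i-1})$ behaves correctly, which is the only place where log-concavity of $A$ and $B$ individually is insufficient. The proof is therefore a routine verification that reveals why the two synchronicity inequalities, rather than just one, are the natural condition to impose.
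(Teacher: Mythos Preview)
Your argument is correct; it is the standard direct verification, and there is nothing to add. Note that the paper does not supply its own proof of this statement: it is recorded as a Fact with a citation to \cite{MR3355766}, so there is no in-paper argument to compare against.
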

\begin{fact}[\cite{MR3355766}, Lemma 2.4]
	\label{fact:transitive-ish_synced_sequence}
	Let $A \sim B$, $B \sim C$, $A \sim C$. Then $A + B \sim C$.
\end{fact}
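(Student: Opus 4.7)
My plan is to prove the statement directly from the definition by splitting each required inequality into two pieces that are handed to us by the given pairwise synchronicity relations. Throughout, write $A = (a_n)$, $B = (b_n)$, $C = (c_n)$, and set $S = A + B = (a_n + b_n)$.

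First, I would observe that the hypothesis $A \sim B$ is precisely what Fact \ref{fact:sum_of_synced_is_log_concave} needs (with $u = v = 1$) to conclude that $S$ is a non-negative log-concave sequence. This is necessary because Definition \ref{def:synced_sequences} is only defined for log-concave inputs, so we need $S$ to be log-concave before the assertion $S \sim C$ is even meaningful.

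Next, I would verify the two defining inequalities for $S \sim C$. For the first, distribute to get
\[
(a_{i-1} + b_{i-1}) c_{i+1} = a_{i-1} c_{i+1} + b_{i-1} c_{i+1}.
\]
The hypothesis $A \sim C$ gives $a_{i-1} c_{i+1} \leq a_i c_i$ and the hypothesis $B \sim C$ gives $b_{i-1} c_{i+1} \leq b_i c_i$. Summing these two yields $(a_{i-1} + b_{i-1}) c_{i+1} \leq (a_i + b_i) c_i$. The symmetric inequality $(a_{i+1} + b_{i+1}) c_{i-1} \leq (a_i + b_i) c_i$ follows by the same argument with the roles of $i-1$ and $i+1$ swapped.

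There is no real obstacle here: the result is a bookkeeping exercise in the definition, and the only subtle point is remembering to invoke $A \sim B$ to guarantee that $S$ is log-concave in the first place. The hypotheses $A \sim C$ and $B \sim C$ are each used twice (once in each direction), while $A \sim B$ is used only to ensure that the claim $S \sim C$ is well-posed.
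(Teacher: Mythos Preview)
Your argument is correct. The paper does not supply its own proof of this fact; it is quoted as a result from \cite{MR3355766} without argument, so there is nothing to compare against beyond confirming soundness. Your use of $A \sim B$ via Fact~\ref{fact:sum_of_synced_is_log_concave} to certify that $A+B$ is log-concave (so that the relation $A+B \sim C$ is well-posed), followed by the termwise splitting of each synchronicity inequality using $A \sim C$ and $B \sim C$, is exactly the straightforward verification one would expect.
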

\begin{example}
	Let $F_7 = (1,5,6,1)$ and let $F_6 = (0,1,4,3)$. A straightforward verification shows that these sequences are log-concave and synchronized. The sequence $F_7 + F_6 = (1,6,10,4)$ is log-concave by Fact \ref{fact:sum_of_synced_is_log_concave}.
\end{example}
\begin{example}
	\label{ex:fibonnaci_polynomial}
	The sequences in the previous example are the coefficient sequences of Fibonacci polynomials, defined by the following equations.
	$$F_0 = 0, F_1 = 1, F_{i+1} = xF_i + F_{i-1}.$$
	The Fibonnaci polynomial $F_i$ is log-concave (by this we mean that the coefficients of the even or odd powers form a log-concave sequence). This is classical; an elegant proof via the synchronicity relation and Fact \ref{fact:transitive-ish_synced_sequence} can be given as follows. Note that $F_i \sim x^2 F_i$ if $F_i$ is log-concave. An easy inductive argument then establishes that $xF_i \sim F_{i-1}$ and $F_i \sim x F_{i-1}$ for all $i$. In particular, $F_i$ is log-concave.
\end{example}
Let $A = (a_i)$ and $B = (b_i)$ be synchronized sequences and consider the ratios $\alpha_i = \frac{a_i}{a_{i-1}}$ and  $\beta_i = \frac{b_i}{b_{i-1}}$. The log-concavity of $A$ and $B$ can be restated as $(\alpha_i)$ and $(\beta_i)$ are increasing sequences. The condition that $A$ and $B$ are synchronized is equivalent to $\beta_{i+1} \leq \alpha_i$ and $\alpha_{i+1} \leq \beta_i$. Thus $A$ and $B$ are synchronized sequences exactly if $\{\alpha_{i+1}, \beta_{i+1}\} \leq \{\alpha_i, \beta_i\}$, where $S \leq T$ means that the inequality holds for all pairs of elements in these sets.
\begin{definition}
	Let $A = (a_i)$ and $B = (b_i)$ be synchronized sequences. The sequence $B$ is ratio-dominant over $A$, denoted $A \lesssim B$, if $a_{i+1} b_i \leq a_i b_{i+1}$ for all $i$.
\end{definition}
Thus $B$ is ratio-dominant over $A$ precisely if $\alpha_i \leq \beta_i$. The ratio-dominance relation is reflexive and transitive as long as the sequences are synchronized.
\begin{notation*}
	In Sections \ref{sec:alexander_polynomial_log_concave} and \ref{sec:strongfox}, we will frequently discuss the synchronicity and ratio-dominance of the coefficient sequences of polynomials, say $f(t)$ and $g(t)$. For brevity we omit the ``coefficient sequence of'' and simply state this as $f \sim g$ and $f \lesssim g$. If $f$ or $g$ are Laurent polynomials and the lowest degree of either polynomial is $t^{-k}$, we define the coefficient sequences of $f$ and $g$ to be the ordinary coefficient sequences of the polynomials $t^k f$ and $t^k g$.
\end{notation*}

\subsection{Convolution of sequences}
\label{subsec:convolution}
In this section we recall the convolution of two sequences and explain how the synchronicity and ratio-dominance relation behave under convolution.
\begin{definition}
	Let $A = (a_i)$ and $B = (b_i)$ be sequences. The convolution of $A$ and $B$ is the sequence $A*B = (c_i)$ with coefficients
	$$c_i = \sum_{j = 0}^i a_j b_{i-j}.$$
\end{definition}
That is, if $A$ ($B$, respectively) is the coefficient sequence of the polynomial $f(t)$ ($g(t)$, respectively) then $A*B$ is the coefficient sequence of the product $(fg)(t)$.
\begin{fact}[\cite{MR3355766}, Theorem 2.8]
	\label{fact:synchronicity_under_convolution}
	Let $A$ and $B$ be synchronized sequences, and let $C$ be log-concave. Then $A * C$ and $B * C$ are synchronized.
\end{fact}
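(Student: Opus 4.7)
The plan is to prove $A*C \sim B*C$ directly from the defining inequalities. Writing $p_i = (A*C)_i = \sum_j a_j c_{i-j}$ and $q_i = (B*C)_i = \sum_k b_k c_{i-k}$, the synchronicity relation unpacks into log-concavity of each of $p$ and $q$ together with the two cross inequalities $p_{i-1} q_{i+1} \le p_i q_i$ and $p_{i+1} q_{i-1} \le p_i q_i$. The individual log-concavity statements are the $A = B$ specialization (using $A \sim A$ whenever $A$ is log-concave), and the two cross inequalities become interchangeable upon swapping $A \leftrightarrow B$, so it suffices to treat the first.

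The main step is a pairing argument for the double sum
\begin{equation*}
p_i q_i - p_{i-1} q_{i+1} = \sum_{j,k} a_j b_k \bigl( c_{i-j} c_{i-k} - c_{i-1-j} c_{i+1-k} \bigr).
\end{equation*}
The index map $(j,k) \mapsto (k-1, j+1)$ is an involution whose fixed points satisfy $k = j+1$ and contribute $0$. A direct substitution shows that at $(k-1, j+1)$ the $c$-factor is the negative of the $c$-factor at $(j,k)$, so each non-fixed pair contributes
\begin{equation*}
\bigl( a_j b_k - a_{k-1} b_{j+1} \bigr) \bigl( c_{i-j} c_{i-k} - c_{i-1-j} c_{i+1-k} \bigr),
\end{equation*}
and it remains to show that the two factors always carry the same sign.

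The sign of the $c$-factor is dictated by log-concavity of $C$: since the ratios $c_l/c_{l-1}$ form a non-increasing sequence, $c_u c_v \ge c_{u-1} c_{v+1}$ holds exactly when $u \le v+1$, which here translates to $k \le j+1$. For the $a,b$-factor I would work with the consecutive ratios $\alpha_l = a_l/a_{l-1}$ and $\beta_l = b_l/b_{l-1}$. The synchronicity of $A$ and $B$ is equivalent, as noted just before the notation remark, to $\beta_{l+1} \le \alpha_l$ and $\alpha_{l+1} \le \beta_l$ for every $l$. Telescoping the first of these over the relevant range of $l$ expresses $(a_j b_k)/(a_{k-1} b_{j+1})$ as a product of ratios $\alpha_l / \beta_{l+1}$ when $k \le j+1$, and as the reciprocal of such a product when $k > j+1$; in both cases each factor of the product is at least $1$, so the sign of $a_j b_k - a_{k-1} b_{j+1}$ matches that of the $c$-factor, and the paired contribution is $\ge 0$.

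The central difficulty is precisely this sign analysis of the $a,b$-factor: synchronicity is phrased as inequalities at a single index, whereas $j$ and $k$ in the double sum can be arbitrarily far apart. Passing to consecutive ratios is what makes the argument go through, since the required inequality telescopes into pointwise comparisons that synchronicity already supplies. Degenerate summands in which some entry of $A$, $B$, or $C$ is zero cause no trouble: either both sides of $a_j b_k \ge a_{k-1} b_{j+1}$ vanish or the surviving ratios obey the same telescoping bound.
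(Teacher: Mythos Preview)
The paper does not prove this statement: it is recorded as a Fact with a citation to Theorem~2.8 of Gross--Mansour--Tucker--Wang, and no argument is supplied. So there is no proof in the paper to compare your proposal against.

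That said, your argument is correct and is the natural direct proof. The pairing via the involution $(j,k)\mapsto(k-1,j+1)$ and the telescoping of $(a_j b_k)/(a_{k-1} b_{j+1})$ into a product of consecutive ratios $\alpha_l/\beta_{l+1}$ is exactly what makes the sign analysis work; specialising to $A=B$ then recovers the classical fact that the convolution of log-concave sequences is log-concave, so the log-concavity of $A*C$ and $B*C$ is indeed subsumed. Two small remarks. First, the phrase ``$c_u c_v \ge c_{u-1} c_{v+1}$ holds exactly when $u \le v+1$'' is a slight overstatement; what you actually use, and what log-concavity delivers, is that the $c$-factor is $\ge 0$ for $k \le j+1$ and $\le 0$ for $k \ge j+1$, which is enough. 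Second, the handling of zero entries is a little breezier than the situation warrants: to make the telescoping airtight you should observe that the synchronicity inequalities at boundary indices force the supports of $A$ and $B$ to differ by at most one at each end, after which the case analysis for vanishing $a_j$, $b_k$, $a_{k-1}$, or $b_{j+1}$ goes through cleanly. Neither point is a genuine gap.
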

In the next fact, one may encounter expressions of the form $\frac{a}{b} \leq \frac{c}{d}$ where $b = 0$ or $d = 0$. In these cases, we define the inequality to be true if $a=b=0$, or $c=d=0$ or $b = d = 0$.
\begin{fact}[\cite{MR3355766}, Theorem 2.20]
	\label{fact:convolution_ratio_dominance_and_lexiocographic}
	Let $\mathcal{A}_i = (a_{i,t})$, $\mathcal{B}_i = (b_{i,t})$, $\mathcal{W}_i = (w_{i,t})$ be finite sequences of sequences (so e.g. $a_{i,t}$ is the $t$-th term of the sequence $\mathcal{A}_i$). Suppose that $W_i \lesssim W_j$ whenever $i < j$ and suppose further that $\mathcal{A}$ and $\mathcal{B}$ satisfy the following two conditions.
	\begin{align}
		\label{eq:lexiocographic_function_1}
		 \frac{b_{1,t}}{a_{1,t}} \leq \frac{b_{2,t}}{a_{2,t}} \leq \dots \leq \frac{b_{n,t}}{a_{n,t}} \leq \frac{b_{1,t+1}}{a_{1,t+1}}, \mbox{ and } \\
		 \label{eq:lexiocographic_function_2}
		 \frac{a_{1,t-1}}{b_{1,t}} \leq \frac{a_{2,t-1}}{b_{2,t}} \leq \dots \leq \frac{a_{n,t-1}}{b_{n,t}} \leq \frac{a_{1,t}}{b_{1,t+1}}
	\end{align}
	Then $$\sum_{i=1}^n \mathcal{W}_i * \mathcal{A}_i \lesssim \sum_{i=1}^n \mathcal{W}_i * \mathcal{B}_i.$$
\end{fact}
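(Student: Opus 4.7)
The plan is to reduce the inequality to a direct algebraic estimate on the coefficient sequences. Write $X = \sum_{i} \mathcal{W}_i * \mathcal{A}_i = (x_t)$ and $Y = \sum_{i} \mathcal{W}_i * \mathcal{B}_i = (y_t)$. The synchronicity half of $X \lesssim Y$ (and the log-concavity of $X$ and $Y$ individually) follows from iterating Facts \ref{fact:sum_of_synced_is_log_concave}, \ref{fact:transitive-ish_synced_sequence}, and \ref{fact:synchronicity_under_convolution}: specializing (\ref{eq:lexiocographic_function_1})--(\ref{eq:lexiocographic_function_2}) to a single index $i$ gives $\mathcal{A}_i \lesssim \mathcal{B}_i$ for each $i$, and convolving with the log-concave $\mathcal{W}_i$ preserves all the synchronicity relations needed to sum. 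With synchronicity in hand, the remaining task is to establish the ratio inequality $x_{t+1} y_t \leq x_t y_{t+1}$ for every $t$.

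Expanding and grouping terms by the $\mathcal{W}$-indices, the quantity $x_t y_{t+1} - x_{t+1} y_t$ equals
\[
\sum_{i_1, i_2, j_1, j_2} w_{i_1, j_1}\, w_{i_2, j_2} \bigl( a_{i_1, t-j_1}\, b_{i_2, t+1-j_2} - a_{i_1, t+1-j_1}\, b_{i_2, t-j_2} \bigr),
\]
which I would symmetrize in the pair $((i_1,j_1),(i_2,j_2))$ and split into diagonal contributions ($i_1 = i_2$) and off-diagonal contributions ($i_1 \neq i_2$). The diagonal sum, for each fixed $i$, reduces to a telescoping estimate along the $j$-axis governed by the single-index ratio-dominance $\mathcal{A}_i \lesssim \mathcal{B}_i$ and by the synchronicity of $\mathcal{W}_i$. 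The off-diagonal sum, after symmetrizing over $i_1 \leftrightarrow i_2$, is where the three hypotheses interact: the chained inequalities in (\ref{eq:lexiocographic_function_1})--(\ref{eq:lexiocographic_function_2}) compare ratios across different values of $i$ at both fixed $t$ and across the boundary $t \to t+1$, while $\mathcal{W}_i \lesssim \mathcal{W}_j$ for $i < j$ controls the cross-terms that arise when $j_1 \neq j_2$.

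The main obstacle I anticipate is the bookkeeping in the off-diagonal estimate, and the cleanest route is induction on $n$. For the base case $n = 2$, one directly exploits the ``shift by one in $t$'' jump at the end of (\ref{eq:lexiocographic_function_2}) --- the subtle part of the hypothesis --- to absorb mismatched $(j_1, j_2)$ pairs by pairing $a_{1,\cdot} b_{2, \cdot}$ terms with $a_{2,\cdot} b_{1, \cdot}$ terms and bounding the resulting expression by means of $\mathcal{W}_1 \lesssim \mathcal{W}_2$. The inductive step groups the first $n-1$ indices into a single combined sequence $(\widetilde{\mathcal{W}}, \widetilde{\mathcal{A}}, \widetilde{\mathcal{B}})$ and reduces to the $n=2$ case applied to $(\widetilde{\mathcal{W}}, \widetilde{\mathcal{A}}, \widetilde{\mathcal{B}})$ and $(\mathcal{W}_n, \mathcal{A}_n, \mathcal{B}_n)$. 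Verifying that (\ref{eq:lexiocographic_function_1})--(\ref{eq:lexiocographic_function_2}) propagate to this combined pair --- in particular that the boundary ratio at $i = n-1 \to i = n$ and the cross-$t$ interlinking both survive the grouping --- is the technical heart of the argument, and is precisely the reason both conditions are assumed simultaneously rather than either one in isolation.
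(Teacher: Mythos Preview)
The paper does not prove this statement at all: it is recorded as a \emph{Fact}, explicitly attributed to \cite{MR3355766}, Theorem 2.20, and is quoted without argument. There is therefore no ``paper's own proof'' to compare your proposal against; the present article simply imports the result and applies it (in Corollary~\ref{cor:ratio_dominance_for_delta}).

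As for your sketch on its own merits: the overall shape --- expand $x_t y_{t+1} - x_{t+1} y_t$, symmetrize over the index pairs, and induct on $n$ --- is the natural line of attack and matches in spirit how such convolution inequalities are typically handled. But what you have written is a plan, not a proof. Two places in particular are not yet under control. First, your claim that the synchronicity of $X$ and $Y$ ``follows from iterating Facts \ref{fact:sum_of_synced_is_log_concave}, \ref{fact:transitive-ish_synced_sequence}, and \ref{fact:synchronicity_under_convolution}'' is asserted without checking that the pairwise synchronicity hypotheses those facts require are actually available for the summands $\mathcal{W}_i * \mathcal{A}_i$ and $\mathcal{W}_j * \mathcal{B}_j$ with $i \neq j$; the chained conditions (\ref{eq:lexiocographic_function_1})--(\ref{eq:lexiocographic_function_2}) do give cross-index information, but you have not shown it is the \emph{right} information for Fact~\ref{fact:transitive-ish_synced_sequence}. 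Second, the inductive step hinges on verifying that the grouped sequences $(\widetilde{\mathcal{A}}, \widetilde{\mathcal{B}})$ again satisfy the two-term analogue of (\ref{eq:lexiocographic_function_1})--(\ref{eq:lexiocographic_function_2}) relative to $(\mathcal{A}_n, \mathcal{B}_n)$; you flag this as ``the technical heart'' but do not attempt it, and it is not automatic --- the boundary inequality linking index $n-1$ to index $n$ and the wrap-around from $t$ to $t+1$ interact in a way that genuinely needs to be written out.
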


\section{The polynomial for Christoffel words}
\label{sec:alexander_polynomial_log_concave}
This section introduces the polynomial $\Delta(t)$. We first give the definition of $\Delta(t)$ and then proceed to discuss the structural properties, which will enable us to prove that $\Delta(t)$ is log-concave.

\subsection{Folded (Christoffel) paths}
\label{subsec:def_of_delta}
A Christoffel word $w$ determines a lattice path in $\mathbb{Z}^2$, with steps in $\{x,-x,y\}$, where $x = (1,0), -x = (-1,0), y = (0,1)$, as pictured in Figure \ref{subfig:christoffel_intro_folded_path_and_delta}. We refer to this path as the folded Christoffel path. The polynomial $\Delta(t)$ will be defined more generally for arbitrary lattice paths with steps in $\{x,-x,y\}$. We start by formalizing the operation of ``folding'' a word over the alphabet $\{x,y\}$, e.g. a Christoffel word, to a path with steps in $\{x,-x,y\}$.

\begin{definition}
	\label{def:folded_path}
	Let $p$ be a lattice path in $\mathbb{Z}^2$. We say that a step of $p$ starts at odd height if the initial point of the step is at $(a,b) \in \mathbb{Z}^2$ with $b$ odd. Let $v \in \{x,y\}^*$ and let $p_v$ be the corresponding lattice path in $\mathbb{Z}^2$ on the step set $\{x,y\}$, starting at the origin. The folded path $F(v)$ is the lattice path in $\mathbb{Z}^2$ on the step set $\{x,-x,y\}$ obtained from $p_v$ by replacing each $x$-step that starts at odd height by the step $-x$.
\end{definition}
\begin{example}
	The folded path $F(v)$ for $v = xxxyyyxyyxxyxxxxyxxyyy$ is pictured in Figure \ref{fig:definition_polynomial_g}.
\end{example}
The path $p_v$ corresponding to a word $v \in \{x,y\}^*$ is injective and it is easy to see that the folded path $F(v)$ is also injective. This allows us to refer to \textit{the} point $(a,b)$ on a path $p_v$ or $F(v)$. The folding operator itself is clearly injective.

\begin{definition}
	\label{def:folded_christoffel} Let $w$ be the Christoffel word of slope $\frac{p}{q}$. The folded Christoffel path is the path $F(w)$ and the folded Christoffel word is the word over the alphabet $\{x,-x,y\}$ encoding the folded Christoffel path.
\end{definition}

\begin{definition}
	\label{def:alex_for_a_path}
	Let $p$ be a lattice path in $\mathbb{Z}^2$ with steps in $\{x,-x,y\}$ and suppose the terminal points of the $\pm x$-steps of the path are $T_p = \{ (i,j) \}$. The polynomial $\Delta_p(t) \in \mathbb{Z}[t,t^{-1}]$ is defined by
	$$\Delta_p(t) = \sum_{k=-\infty}^{\infty} c_k t^k,$$
	where $c_k$ is the count of points $(k,*) \in T_p$.
\end{definition}
Note that the initial point of the path $p$ is an integral part of the definition of $\Delta_p(t)$ and not assumed to be the origin. The value of $\Delta_p(1)$ is the number of $\pm x$ steps of the path $p$; for folded paths $\Delta_{F(v)}(1)$ recovers the $x$-length of the word $v$. We indicate the terminal points of the $\pm x$-steps of the lattice path $p$ by decorating the path with solid black dots as in Figure \ref{figure:christoffel_word_intro}. Let $w$ be a Christoffel word. We write $\Delta_w(t) = \Delta_{F(w)}(t)$ for the polynomial and $T_w = T_{F(w)}$ for the terminal points in this case. This will cause no confusion as we use the symbol $w$ exclusively to refer to Christoffel words.

The polynomial $\Delta$ counts the positions of the $\pm x$-steps of the lattice path. This motivates the following definition of a polynomial which counts the positions of the $yy$-steps of the lattice path, up to a shift.

\begin{definition}
	Let $p$ be a lattice path in $\mathbb{Z}^2$ with steps in $\{x,-x,y\}$. Consider the set of $y$-steps of the path that are immediately followed by another $y$-step and let $SC_p = \{ s(i,j) \}$ be the ``shifted'' terminal points of such steps, where $s(i,j) = (i-1,j)$ if $j$ is odd and $s(i,j) = (i,j)$ if $j$ is even. The polynomial $G_p(t) \in \mathbb{Z}[t,t^{-1}]$ is defined by
	$$G_p(t) = \sum_{k=-\infty}^{\infty} c_k t^k,$$
	where $c_k$ is the count of points $(k,*) \in SC_p$.
\end{definition}

The (shifted) terminal point of a $y$-step immediately followed by another $y$-step is the (shifted) center point of the matching $yy$-step. For this reason, we refer to $SC_p$ as the \textbf{shifted center points} of the $yy$-steps of $p$. We indicate these points with hollow green dots on the lattice path as in Figure \ref{fig:definition_polynomial_g}. If $w$ is a Christoffel word, then $G_w(t) = G_{F(w)}(t)$ denotes the polynomial for the folded Christoffel path for $w$. If the Christoffel word is of slope less than one then $G_w(t) = 0$, as the Christoffel path and consequently the folded Christoffel path do not contain any $yy$-steps. Indeed, for Christoffel words $G_w(t) \neq 0$ if and only if $w = x\Pal(yv)x$ for a word $v \in \{x,y\}^*$.

\begin{figure}
	\captionsetup{margin=0in}
	\captionbox{$G_{F(v)}(t) = t + 3t^2 + t^3$. \label{fig:definition_polynomial_g}}[.5\textwidth]{
		\includesvg{figures/Proof_of_main_theorem_01_Definition_of_polynomial_G.svg}
	}%
	\captionbox{The $yy$-reduction.\label{fig:yy-reduction}}[.5\textwidth]{
		\includesvg{figures/Proof_of_main_theorem_13_yy_reduction.svg}
	}
	\label{fig:definition_of_folded_path,g,yy-reduction}
\end{figure}

Let $p$ be a lattice path with steps in $\{x, -x, y\}$. We refer to the terminal points of the $\pm x$-steps and the shifted center points of the $yy$-steps as \textbf{dots contributing} to $\Delta_p(t)$ and $G_p(t)$, respectively. Note that there is a natural order on the dots, namely the order in which they are encountered as one traverses the path $p$. We identify a dot $d=(a,b)$ with the corresponding term $t^a$ of the polynomial $\Delta_p(t)$ or $G_p(t)$. The degree of the identified monomial is the horizontal coordinate of $d$ and accordingly the \textbf{degree of a dot} is defined as $\deg(d) = a$. The implied expressions for the polynomials $\Delta_p(t)$ and $G_p(t)$ as sums over dots are 
\begin{equation}
	\label{eq:polynomial_as_dots}
	\Delta_p(t)= \sum_{d \in T_p} t^{\deg(d)} \mbox{, and ~} G_p(t)= \sum_{d \in SC_p} t^{\deg(d)}.
\end{equation}

\subsection{Properties of folded paths}
\label{subsec:basic_properties_of_delta}
We now investigate the properties of folded paths and the polynomials $\Delta(t)$ and $G(t)$. The first result, Lemma \ref{lem:delta_and_g_of_a_concatenation}, is a foundational technical lemma describing the behavior of the polynomials under the concatenation of words. We then define and study an operation called $yy$-reduction, defined for Christoffel words of slope $1 < \frac{p}{q} < 2$, and calculate the polynomial for a $yy$-reduced word.

\begin{lemma}
	\label{lem:delta_and_g_of_a_concatenation}
	Let $u_1,u_2 \in \{x,y\}^*$ and let $(a,b)$ be the terminal point of the folded path $F(u_1)$. Then
	\begin{equation}
		\label{eq:delta_under_word_concatenation}
		\Delta_{F(u_1 u_2)}(t) = \Delta_{F(u_1)}(t) + t^a \begin{cases}
			\Delta_{F(u_2)}(t) & \text{for~} b \text{~even}\\
			\Delta_{F(u_2)}(t^{-1}) & \text{for~} b \text{~odd}
		\end{cases}.
	\end{equation}
	If either the last letter of $u_1$ or the first letter of $u_2$ is $x$, then
	\begin{equation}
		\label{eq:g_under_word_concatenation}
		G_{F(u_1 u_2)}(t) = G_{F(u_1)}(t) + t^a \begin{cases}
			G_{F(u_2)}(t) & \text{for~} b \text{~even}\\
			t^{-1} G_{F(u_2)}(t^{-1}) & \text{for~} b \text{~odd}
		\end{cases}.
	\end{equation}
\end{lemma}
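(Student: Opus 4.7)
The plan is to unwind the definitions directly by tracking how the folded path $F(u_1 u_2)$ looks near the junction point $(a,b)$. The portion of $F(u_1 u_2)$ coming from $u_2$ is a translated copy of $F(u_2)$, but whether it is also reflected horizontally depends only on the parity of $b$, because the folding rule for each $x$-letter is determined by the parity of the current height.

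First I would prove an intermediate coordinate-transformation claim: if the $i$-th letter of $u_2$ lands at $(x_i, y_i)$ in $F(u_2)$, then the same step lands at $(a + x_i, b + y_i)$ in $F(u_1 u_2)$ when $b$ is even, and at $(a - x_i, b + y_i)$ when $b$ is odd. The vertical component is always a translation by $b$; for the horizontal component one inducts on $i$, noting that the sign assigned to an $x$-letter is determined by the parity of $y_{i-1}$ in $F(u_2)$ versus $b + y_{i-1}$ in $F(u_1 u_2)$, and these parities agree exactly when $b$ is even. With this in hand, equation \eqref{eq:delta_under_word_concatenation} is immediate: the multiset $T_{F(u_1 u_2)}$ partitions into $T_{F(u_1)}$ and the image of $T_{F(u_2)}$ under $k \mapsto a + k$ (respectively $k \mapsto a - k$) when $b$ is even (respectively odd), and summing $t^{\deg(d)}$ over each part gives the two cases.

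For equation \eqref{eq:g_under_word_concatenation} the same case analysis applies, but with two additional subtleties. First, the hypothesis that $u_1$ ends in $x$ or $u_2$ begins with $x$ is exactly what ensures that the $yy$-steps of $u_1 u_2$ partition into $yy$-steps contained entirely in $u_1$ and those contained entirely in $u_2$, ruling out a new contribution at the seam. Second, the shift $s(i,j)$ defining $SC_p$ depends on the parity of $j$, and when $b$ is odd this parity flips for center points coming from $u_2$. A short case check, splitting by the parity of $j$, shows that in both subcases the shifted horizontal coordinate in $F(u_1 u_2)$ equals $a - s_x - 1$, where $s_x$ is the shifted horizontal coordinate of the same center in $F(u_2)$; this accounts for the extra factor of $t^{-1}$ in the odd case.

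The main obstacle is precisely this parity bookkeeping in the $G$ identity: both the horizontal reflection and the shift operator $s$ depend on parities, and one has to verify that they combine consistently across the two subcases for $j$ to produce exactly $t^{-1} G_{F(u_2)}(t^{-1})$, with no off-by-one artifact. The $\Delta$ identity, by contrast, has no analogous shift and should fall out mechanically from the coordinate-transformation claim.
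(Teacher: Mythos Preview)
Your proposal is correct and follows essentially the same approach as the paper. The paper phrases the key step geometrically---writing $F(u_1u_2)$ as $F(u_1)$ concatenated with either $F(u_2)$ or its reflection $\tau_* F(u_2)$ across the line $x=0$, then invoking how $\Delta$ and $G$ transform under translation and reflection---whereas you work with explicit coordinates and induct on the letter index; but the content is identical, including your correct identification of the parity bookkeeping for the shift $s(i,j)$ as the only delicate point in the $G$ identity.
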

\begin{proof}
	The word $s \in \{x,-x,y\}^*$ encoding a folded path $F(v)$ is given by replacing every occurence of $x$ in $v$ that is preceeded by an odd number of occurences of $y$ by $-x$. So let $s$, $s_1, s_2$ be the words encoding $F(u_1 u_2)$, $F(u_1)$ and $F(u_2)$, respectively and let $k,l$ be the number of occurences of $x$ in $u_1, u_2$, respectively. Clearly $s = s_1 s'$. The number of occurences of $y$ in $u_1$ equals $b$. This implies that for $1 \leq i \leq l$, the number of occurences of $y$ preceeding the $(k+i)$-th occurence of $x$ in $u_1 u_2$ equals $b$ + the number of occurences of $y$ preceeding the $i$-th occurence of $x$ in $u_2$. Thus there are two cases: Either $b$ is even and $s' = s_2$ or $b$ is odd and $s'$ is given by replacing all factors of $\pm x$ in $s_2$ by $\mp x$. In the first case, $F(u_1 u_2) = F(u_1) * F(u_2)$ and in the second case, $F(u_1 u_2) = F(u_1) * \tau_* F(u_2)$, where $\tau$ is the reflection along $x=0$ as this reflection flips $\pm x$-steps to $\mp x$-steps, preserves $y$-steps and fixes the origin.

	Equation \ref{eq:polynomial_as_dots} and elementary plane geometry imply that (1) if a lattice path $r'$ is the reflection of $r$ along the vertical line $x=\alpha$, then $\Delta_{r'}(t) = t^{2\alpha} \Delta_r(t^{-1})$, (2) if a lattice path $r'$ is a translation of $r$ by $(\alpha, \beta)$ then $\Delta_{r'}(t) = t^\alpha \Delta_r(t)$ and (3) if a lattice path is a concatenation $r = r_1 * r_2$ with the terminal point of $r_1$ being $(\alpha,\beta)$ then $\Delta_r(t) = \Delta_{r_1}(t) + t^\alpha \Delta_{r_2}(t)$. Equation \ref{eq:delta_under_word_concatenation} thus follows from the expression of $F(u_1 u_2)$ as a concatenation of paths involving $F(u_1)$ and $F(u_2)$.

	If either the last letter of $u_1$ or the first letter of $u_2$ is $x$, then all monomials in $G_{F(u_1 u_2)}(t)$ correspond to occurences of $yy$ in either $u_1$ or $u_2$. Formulae similar to those stated in the previous paragraph for $\Delta_r(t)$ hold for $G_r(t)$. Indeed, to prove Equation \ref{eq:g_under_word_concatenation} it is sufficient to consider translations by even vertical distances and reflections composed with translations by odd vertical distances and to observe that the shift map $s(i,j)$ in the definition of the polynomial $G_p(t)$ is well-behaved under such transformations. Equation \ref{eq:g_under_word_concatenation} follows from the expression of $F(u_1 u_2)$ as a concatenation.
\end{proof}
\todo{Draw an example of a folded path as a concatenation of two paths and show Equation \ref{eq:delta_under_word_concatenation} and Equation \ref{eq:g_under_word_concatenation} holds.}
\begin{cor}
	\label{cor:delta_invariant_under_killing_yy}
	Let $u_1, u_2 \in \{x,y\}^*$. The folded paths $F(u_1 yy u_2)$ and $F(u_1 u_2)$ satisfy
	$$\Delta_{F(u_1 yy u_2)}(t) = \Delta_{F(u_1 u_2)}(t).$$
\end{cor}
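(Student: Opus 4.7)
The strategy is to apply Lemma \ref{lem:delta_and_g_of_a_concatenation} twice, exploiting the fact that inserting a $yy$ translates the second half of the folded path by two vertical units, which leaves parities of heights (and hence the $x$ versus $-x$ orientations of subsequent $x$-steps) untouched.

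Concretely, let $(a,b)$ denote the terminal point of $F(u_1)$. I would first compute $\Delta_{F(yy\, u_2)}(t)$ using the concatenation formula with the split $yy \cdot u_2$: the folded path $F(yy)$ consists of two $y$-steps and thus contributes no terminal points of $\pm x$-steps, so $\Delta_{F(yy)}(t)=0$; its terminal point is $(0,2)$, whose second coordinate is even, so the formula yields $\Delta_{F(yy\, u_2)}(t) = \Delta_{F(u_2)}(t)$.

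Next, I would apply Lemma \ref{lem:delta_and_g_of_a_concatenation} to the split $u_1 \cdot (yy\, u_2)$. The terminal point of $F(u_1)$ is still $(a,b)$, and plugging in the expression for $\Delta_{F(yy\, u_2)}(t)$ computed above gives
\[
\Delta_{F(u_1 yy u_2)}(t) = \Delta_{F(u_1)}(t) + t^a \begin{cases} \Delta_{F(u_2)}(t) & b \text{ even},\\ \Delta_{F(u_2)}(t^{-1}) & b \text{ odd}. \end{cases}
\]
A third application of the same lemma, this time to the split $u_1 \cdot u_2$, shows that the right-hand side coincides with $\Delta_{F(u_1 u_2)}(t)$, completing the argument.

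There is no real obstacle here: the proof is essentially a bookkeeping exercise in the concatenation lemma. The only thing to check carefully is the parity argument, namely that after traversing $yy$ the height parity is preserved, so that the case distinction (even versus odd) in Lemma \ref{lem:delta_and_g_of_a_concatenation} produces the same branch for the splits $u_1 \cdot (yy\, u_2)$ and $u_1 \cdot u_2$. Note also that one does not even need the hypothesis about the first/last letters being $x$ appearing in the $G$-part of the lemma, since the statement only involves $\Delta$.
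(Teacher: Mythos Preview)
Your proof is correct and follows essentially the same approach as the paper's: both apply Lemma~\ref{lem:delta_and_g_of_a_concatenation} together with $\Delta_{F(yy)}(t)=0$ and the observation that a shift of $2$ in height preserves parity. The only cosmetic difference is that the paper groups $yy$ with $u_1$ (splitting as $(u_1 yy)\cdot u_2$) whereas you group it with $u_2$ (splitting as $u_1\cdot(yy\,u_2)$); the arguments are otherwise identical.
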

\begin{proof}
	Let $(a,b)$ be the terminal point of $F(u_1)$. Then $(a,b+2)$ is the terminal point of $F(u_1yy)$. Further, $\Delta_{F(u_1)}(t) = \Delta_{F(u_1yy)}(t)$, by Lemma \ref{lem:delta_and_g_of_a_concatenation} as $\Delta_{F(yy)}(t) = 0$. The claim now follows from applying Lemma \ref{lem:delta_and_g_of_a_concatenation} once more.
\end{proof}

\begin{cor}
	\label{cor:delta_of_xyu_vs_xu}
	Let $u \in \{x,v\}^*$. Then $\Delta_{F(xyu)}(t) = t^2 \Delta_{F(xu)}(t^{-1})$.
\end{cor}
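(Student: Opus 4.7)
The plan is to deduce this corollary directly from Lemma \ref{lem:delta_and_g_of_a_concatenation} by writing each folded path as a concatenation split after the initial $x$ or $xy$ prefix. The point is that $F(x)$ ends at the even-height point $(1,0)$, whereas $F(xy)$ ends at the odd-height point $(1,1)$, so the two applications of the lemma differ precisely by the substitution $t \leftrightarrow t^{-1}$ in the contribution of $F(u)$, which is exactly what we want to witness.

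Concretely, I would first decompose $xu = (x)(u)$. Since the terminal point of $F(x)$ is $(1,0)$ and $\Delta_{F(x)}(t) = t$, the even case of Equation \ref{eq:delta_under_word_concatenation} gives
\begin{equation*}
\Delta_{F(xu)}(t) = t + t\,\Delta_{F(u)}(t).
\end{equation*}
Substituting $t \mapsto t^{-1}$ and multiplying by $t^2$ yields $t + t\,\Delta_{F(u)}(t^{-1})$.

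Next I would decompose $xyu = (xy)(u)$. Since the terminal point of $F(xy)$ is $(1,1)$ (odd height) and $\Delta_{F(xy)}(t) = t$ (only the single $x$-step contributes a dot; the $y$-step contributes none), the odd case of Equation \ref{eq:delta_under_word_concatenation} gives
\begin{equation*}
\Delta_{F(xyu)}(t) = t + t\,\Delta_{F(u)}(t^{-1}),
\end{equation*}
which matches the expression derived above.

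There is no real obstacle here; the only thing to double-check is the base computation $\Delta_{F(xy)}(t) = t$ and the parity of the second coordinate of the prefixes $F(x)$ and $F(xy)$, both of which are immediate from Definition \ref{def:folded_path}. Geometrically, the lemma is encoding the fact that after the initial $xy$, the path continues at odd height, so the remaining $x$-steps of $u$ fold into $-x$-steps; this makes the tail of $F(xyu)$ the reflection of the tail of $F(xu)$ about the vertical line $x = 1$, and such a reflection sends the polynomial $p(t)$ to $t^{2}p(t^{-1})$.
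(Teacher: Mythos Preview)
Your proposal is correct and follows essentially the same route as the paper: both arguments split at the prefixes $x$ and $xy$, apply Lemma~\ref{lem:delta_and_g_of_a_concatenation} using $\Delta_{F(x)}(t)=\Delta_{F(xy)}(t)=t$ together with the parity of the terminal heights $(1,0)$ and $(1,1)$, and then match the resulting expressions. Your closing geometric remark about the reflection across $x=1$ is a nice supplement but not needed for the argument.
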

\begin{proof}
	Note that $\Delta_{F(x)}(t) = \Delta_{F(xy)}(t) = t$. Using Lemma \ref{lem:delta_and_g_of_a_concatenation} one computes $\Delta_{F{xu}}(t) = t + t \Delta_{F(u)}(t)$ and another application of Lemma \ref{lem:delta_and_g_of_a_concatenation} yields
	\begin{align*}
		\Delta_{F(xyu)} &= t + t \Delta_{F(u)}(t^{-1}) \\
						&= t^2 (t^{-1} + t^{-1} \Delta_{F(u)}(t^{-1})) \\
						&= t^2 \Delta_{F(xu)}(t^{-1}). \qedhere
	\end{align*}
\end{proof}

We now give another description of the polynomial $\Delta_{F(v)}(t)$. This result was explained to us by Christophe Reutenauer, who obtained it using the theory of rational non-commutative series \cite{berstel2011noncommutative}.

\begin{theorem}
	\label{thm:delta_via_matrices}
	Let $M_3(\mathbb{Z}[t,t^{-1}])$ be the multiplicative monoid of $3 \times 3$-matrices whose entries are Laurent polynomials over the integers. Define the morphism of monoids $\psi : \{x,y\}^* \to M_3(\mathbb{Z}[t,t^{-1}])$ by
	\begin{equation*}
		x \mapsto \begin{pmatrix}
			t & 0 & t \\
			0 & t^{-1} & t^{-1} \\
			0 & 0 & 1
		\end{pmatrix}
		\mbox{~and~}
		y \mapsto \begin{pmatrix}
			0 & 1 & 0 \\
			1 & 0 & 0 \\
			0 & 0 & 1
		\end{pmatrix}.
	\end{equation*}
	Then the polynomial $\Delta_{F(v)}(t)$ is given as the $(1,3)$ entry of the matrix $\psi(v)$.
\end{theorem}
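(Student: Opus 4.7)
The plan is to prove the statement by induction on the length of $v$, after strengthening the claim so that it pins down \emph{all} entries of $\psi(v)$, not just the $(1,3)$ entry. Inspecting $\psi(x)$ and $\psi(y)$ it is apparent that the third row is always $(0,0,1)$, and a short hand calculation on short words suggests the following invariant. If $F(v)$ terminates at the lattice point $(a,b)$, then
\[
\psi(v) = \begin{pmatrix} M(v) & C(v) \\ 0 & 1 \end{pmatrix}, \qquad C(v) = \begin{pmatrix} \Delta_{F(v)}(t) \\ \Delta_{F(v)}(t^{-1}) \end{pmatrix},
\]
where the $2\times 2$ block $M(v)$ is the diagonal matrix $\mathrm{diag}(t^a, t^{-a})$ when $b$ is even and the anti-diagonal matrix $\begin{pmatrix} 0 & t^a \\ t^{-a} & 0 \end{pmatrix}$ when $b$ is odd. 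The desired conclusion is the first coordinate of $C(v)$.

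The base case $v = \epsilon$ is immediate since $\psi(\epsilon) = I$, $\Delta_{F(\epsilon)}(t) = 0$, and $(a,b) = (0,0)$. For the inductive step, suppose the invariant holds for $v$ and consider the two concatenations $v \to vx$ and $v \to vy$. For the $x$-case, the third column of $\psi(vx)$ is $\psi(v)(t, t^{-1}, 1)^T = M(v)(t,t^{-1})^T + C(v)$. When $b$ is even this increments $C(v)$ by $(t^{a+1}, t^{-a-1})^T$ and the new endpoint is $(a+1, b)$; when $b$ is odd it increments $C(v)$ by $(t^{a-1}, t^{-a+1})^T$ and the new endpoint is $(a-1, b)$ because the $x$-step is folded to a $-x$-step. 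In each case the increment to the first coordinate matches the contribution of the new $\pm x$-step to $\Delta_{F(vx)}(t)$ guaranteed by Lemma~\ref{lem:delta_and_g_of_a_concatenation} (or directly from Definition~\ref{def:alex_for_a_path}), and a parallel computation verifies that the top-left block transforms correctly. For the $y$-case, right-multiplication by $\psi(y)$ swaps the first two columns of $\psi(v)$ and fixes the third; since $F(vy)$ has the same $\pm x$-step endpoints as $F(v)$, the column $C(v)$ is unchanged, and swapping the columns of $M(v)$ interchanges the diagonal and anti-diagonal forms, which is precisely the effect of incrementing $b$ by one.

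The main obstacle is conceptual rather than computational: one has to guess the correct invariant for the full matrix. The key observations that make it findable are that $\psi(y)$ acts on any product on the right by swapping the first two columns (so the top-left block only ever lives in the two-element set of diagonal or anti-diagonal matrices), and that the third column of $\psi(x)$ is the vector $(t, t^{-1}, 1)^T$, which dictates that the second coordinate of $C(v)$ should be the reciprocal companion $\Delta_{F(v)}(t^{-1})$. Once the invariant is in hand the verification is a mechanical case split on the parity of $b$ and the appended letter.
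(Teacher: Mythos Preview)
Your proof is correct and follows essentially the same route as the paper: strengthen the claim to an explicit description of the full matrix $\psi(v)$ (with the top-left $2\times 2$ block diagonal or anti-diagonal according to the parity of the height $b$ of the endpoint), and induct on $|v|$ via a case split on the appended letter. Your version is marginally cleaner in that you also identify the $(2,3)$ entry as $\Delta_{F(v)}(t^{-1})$ (the paper leaves it as $\star$) and you phrase the invariant in terms of the endpoint $(a,b)$ rather than the last dot, which spares you the paper's separate treatment of $v=y^n$; but these are cosmetic differences, not a different argument.
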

\begin{proof}
	First note that the morphism $\psi$ is well-defined as $\{x,y\}^*$ is free. The result is immediate for words $v = y^n$, so suppose otherwise. Let $d_1, \dots, d_k$ be the dots contributing to $\Delta_{F(v)}(t)$, let $d = \deg(d_k)$ and let $|v|_y$ denote the number of occurences of $y$ in $v$. We claim that the matrix $\psi(v)$ is of form
	$$\psi(v) =  \begin{pmatrix}
		t^d & 0 & \Delta_{F(v)} \\
		0 & t^{-d} & \star \\
		0 & 0 & 1
	\end{pmatrix} \mbox{~or~} \psi(v) = \begin{pmatrix}
		0 & t^d & \Delta_{F(v)} \\
		t^{-d} & 0 & \star \\
		0 & 0 & 1
	\end{pmatrix},$$
	where the first form occurs if $|v|_y$ is even and the second if $|v|_y$ is odd. We present an elementary proof by induction on $n = |v|$. This is immediate if $n=1$. Suppose that the claim holds for all words $v'$ of length at most $n$ and let $v$ be of length $n+1$. If $v = v'y$ then Lemma \ref{lem:delta_and_g_of_a_concatenation} implies that $\Delta_{F(v')} = \Delta_{F(v)}$. Therefore, the claim follows from the fact that $\psi(y)$ is the permutation matrix that, acting on the right, exchanges the two forms. If $v = v'x$ then there is one additional dot $d_{k+1}$ contributing to $\Delta_{F(v)}$. Denote the degree of this dot by $d' = \deg(d_{k+1})$ and note that $d' = d \pm 1$, depending on whether $d_{k+1}$ is the terminal point of a $\pm x$-step, i.e. depending on whether $|v|_y$ is even ($+1$) or odd ($-1$). If $|v|_y$ is even, 
	\begin{align*}
		\psi(v) = \psi(v')\psi(x)
	= \begin{pmatrix}
		t^{d+1} & 0 & t^{d+1} + \Delta_{F(v')} \\
		0 & t^{-d-1} & \star \\
		0 & 0 & 1
	\end{pmatrix}
	= \begin{pmatrix}
		t^{d'} & 0 & \Delta_{F(v)} \\
		0 & t^{d'} & \star \\
		0 & 0 & 1
	\end{pmatrix},
	\end{align*}
	The case $|v|_y$ odd is similar. This completes the induction.
\end{proof}

\begin{definition}
	Let $w = x\Pal(yxv)y$ for $v \in \{x,y\}^*$, i.e. $w$ is Christoffel word of slope $1 < \frac{p}{q} < 2$ and let $u = \Pal(yxv)$ be the palindromic factor of $w$. Define the \textbf{$yy$-reduction of $w$} to be the word $r(w) = xu'y$, where $u'$ is obtained from $u$ by deleting the leading and trailing $y$, and deleting all occurences of $yy$.
\end{definition}
\begin{example}
	This example is pictured in Figure \ref{fig:yy-reduction}. Let $w = x\Pal(y xx yy)y$. The palindromic factor of $w$ is
	\begin{align*}
		u &= \Pal(y xx yy) = y x y x yy x y x yy x y x y. \\
		\intertext{Striking the leading and trailing $y$, and all occurences of $yy$ yields}
		u' &= \cancel{y}x y x \cancel{yy} x y x \cancel{yy} x y x \cancel{y} = xyxxyxxyx,
	\end{align*}
	and therefore $r(w) = x (xyxxyxxyx) y = x \Pal(xyxx) y$.
\end{example}

We next show in Lemma \ref{lemma:word_of_yy_reduction} that a $yy$-reduced word is a Christoffel word. Indeed, let the slope of $w$ be $1 < \frac{p}{q} < 2$. By counting the occurences of $x$ and $y$ in $r(w)$ one notes that the slope of the $yy$-reduced word $r(w)$ is $0 < -\frac{p}{q} + 2 < 1$ (though a priori, $r(w)$ may not be a Christoffel word). A corollary of the proof of Lemma \ref{lemma:word_of_yy_reduction} is that in the expressions $w = x\Pal(v)y$ and $r(w) = x\Pal(v')y$, the length of $v$ is strictly less than the length of $v'$. This will form part of the inductive argument in the proof of Theorem \ref{thm:maintheorem}.

\begin{lemma}
	\label{lemma:word_of_yy_reduction}
	Let $w = x\Pal(yxv)y$ for $v \in \{x,y\}^*$ and let $E : \{x,y\}^* \to \{x,y\}^*$ be the morphism exchanging $x$ and $y$. The $yy$-reduced word is $r(w) = x\Pal(xE(v))y$.
\end{lemma}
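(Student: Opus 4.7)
The plan is to prove, by induction on $|v|$, the equivalent statement $s(\Pal(yxv)) = \Pal(xE(v))$, where $s$ denotes the operation ``strip the leading and trailing $y$, then iteratively delete every occurrence of $yy$''. The lemma then follows at once, because $r(w) = x \cdot s(\Pal(yxv)) \cdot y$ by definition of the $yy$-reduction. The base case $v = \epsilon$ is immediate: I directly compute $\Pal(yx) = yxy$ and verify that $s(yxy) = x = \Pal(x) = \Pal(xE(\epsilon))$.

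For the inductive step, I would write $v = v'z$ with $z \in \{x,y\}$ and set $P = \Pal(yxv')$, $Q = \Pal(xE(v'))$, so that by the inductive hypothesis $s(P) = Q$. Using the recursion $\Pal(wa) = (\Pal(w)a)^+$ from Definition~\ref{def:palindromic_closure} and the fact that $E$ swaps $x$ and $y$, the claim for $v$ reduces to the key identity
$$
s\bigl((Pz)^+\bigr) \;=\; \bigl(Q \cdot E(z)\bigr)^+,
$$
which I would verify separately for $z = x$ and $z = y$.

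To handle the identity, I would first establish, by a short auxiliary induction on $|v'|$, that $P = \Pal(yxv')$ begins and ends with a $y$-run of length exactly one; this follows because $\Pal(yx) = yxy$ has this property and the palindromic-closure operator $(\cdot \, a)^+$ leaves the initial letter of its argument unchanged. Consequently $P$ admits a canonical palindromic form $P = y \, x^{b_1} y^{a_1} \cdots y^{a_{k-1}} x^{b_k} \, y$ with $a_i = a_{k-i}$ and $b_j = b_{k+1-j}$, and the relation $s(P) = Q$ translates to: $Q$ has the same $x$-run lengths $b_j$ as $P$, while its internal $y$-runs have lengths $a_i \bmod 2 \in \{0,1\}$. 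With this bookkeeping in place, both sides of the displayed identity can be computed by the standard formula $(wa)^+ = w a \widetilde{u}$, where $u$ is the non-palindromic prefix of $wa$ cut by its longest palindromic suffix, and after applying $s$ one verifies termwise agreement.

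The main obstacle I anticipate is the asymmetry between the longest palindromic suffixes on the two sides. The LPS of $Pz$ can reach back through internal $yy$-factors of $P$ (which are themselves palindromic) and thus be strictly longer than the LPS of $Q \cdot E(z)$; hence the reflected prefixes $\widetilde{u}$ contributed by palindromic closure differ in length on the two sides. What should save the identity is the observation that the extra reflected material produced on the $P$-side consists entirely of $y$-letters placed symmetrically and therefore paired into $yy$ by the reflection, and hence absorbed precisely by the iterated $yy$-deletion step of $s$; what remains matches exactly the image of the $Q$-side with the $E$-swapped letter appended. The case $z = y$ looks subtler than $z = x$, since the appended $y$ is of the same type as the outer $y$'s of $P$ and can align with internal $yy$-blocks to extend the palindromic suffix significantly; nevertheless the parity information encoded in the canonical form should keep the accounting consistent.
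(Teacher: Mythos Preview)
Your inductive strategy is sound in outline, but it takes a very different route from the paper and leaves the genuinely hard step unfinished. The paper gives a direct, induction-free argument using the monoid action of $\{x,y\}^*$ on $\Pal(\{x,y\}^*)$ introduced in Section~\ref{sec:christoffel}: one writes the palindromic factor explicitly as
\[
u=\Pal(yxv)=y(xyy)^{r_1}xy\cdots(xyy)^{r_n}xy,
\]
which is nothing but the statement $u=yx\cdot(y^{r_1}x\cdots xy^{r_n})$; hence $y^{r_1}x\cdots xy^{r_n}=\Pal(v)$. Stripping the outer $y$'s and deleting every $yy$ then gives $u'=x^{r_1+1}y\cdots yx^{r_n+1}=x\cdot E(y^{r_1}x\cdots xy^{r_n})=x\cdot E(\Pal(v))=\Pal(xE(v))$, using that $E$ commutes with $\Pal$. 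No palindromic-suffix analysis is needed at all; the monoid action absorbs the work your LPS comparison would have to do by hand.

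By contrast, your plan defers everything to the identity $s((Pz)^+)=(Q\,E(z))^+$, and the sketch you give for it has a real gap. Your ``canonical form'' bookkeeping asserts that $Q$ has the same $x$-run lengths $b_j$ as $P$ with internal $y$-runs reduced mod $2$; but once some $a_i$ becomes $0$, adjacent $x$-runs in $Q$ merge, so the actual run decomposition of $Q$ (which is what governs the longest palindromic suffix of $Q\,E(z)$) is not the sequence $(b_j)$. For instance, with $P=\Pal(yxy)=yxyyxy$ one has $b_1=b_2=1$, yet $Q=xx$ has a single $x$-run of length $2$. Your heuristic that the ``extra reflected material on the $P$-side'' is purely $y$-letters pairing into $yy$'s is correct in the examples one checks, but establishing it in general requires precisely the control over the LPS of $Pz$ versus $Q\,E(z)$ that the merged runs obscure. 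The identity can be rescued (e.g.\ via Justin's formula for iterated palindromic closure), but what you have written does not yet do so, and the resulting argument would be substantially longer than the paper's two-line computation with the monoid action.
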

\begin{proof}
	Let $u = \Pal(yxv)$ be the palindromic factor of $w$. Then $u$ can be written as $$u = y (xyy)^{r_1} xy \cdots (xyy)^{r_n} xy,$$ where $r_i \geq 0$. Deleting the leading and trailing $y$ and deleting all occurences of $yy$ gives
	$$u' = x^{r_1 + 1}y \cdots  y x^{r_n + 1} = x \cdot (x^{r_1} y \cdots y x^{r_n}) = x \cdot E(y^{r_1} x \cdots x y^{r_n}). $$
	On the other hand,
	$$
		u = y (xyy)^{r_1} xy \cdots (xyy)^{r_n} xy
		= y \cdot \left ( (xy)^{r_1} x \cdots (xy)^{r_n} \right )
		= yx \cdot \left ( y^{r_1}x \cdots x y^{r_n} ) \right ),
	$$
	and so $r(w) = xu'y = x \Pal(x E(v)) y$.
\end{proof}

\begin{lemma}
	\label{lem:delta_of_yy_reduction}
	Let $w = x\Pal(yxv)y$ for $v \in \{x,y\}^*$. Then $\Delta_w(t) = t^2 \Delta_{r(w)}(t^{-1})$.
\end{lemma}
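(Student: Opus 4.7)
The plan is to combine three ingredients already in hand: the explicit block decomposition of $u = \Pal(yxv)$ from the proof of Lemma \ref{lemma:word_of_yy_reduction}, the $yy$-invariance of $\Delta$ (Corollary \ref{cor:delta_invariant_under_killing_yy}), and the $xy$-vs-$x$ identity (Corollary \ref{cor:delta_of_xyu_vs_xu}). The goal is to rewrite $w$, up to equality of $\Delta$, as a word to which Corollary \ref{cor:delta_of_xyu_vs_xu} applies directly, and then to recognize the resulting inner word as $r(w)$ up to a trailing $y$ that $\Delta$ does not see.

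First I would use the decomposition $u = y(xyy)^{r_1}xy(xyy)^{r_2}xy\cdots(xyy)^{r_n}xy$ recalled in the proof of Lemma \ref{lemma:word_of_yy_reduction}, together with the associated $u' = x^{r_1+1}y\,x^{r_2+1}y\cdots y\,x^{r_n+1}$. Then I would apply Corollary \ref{cor:delta_invariant_under_killing_yy} iteratively, deleting the $r_1+\cdots+r_n$ internal $yy$-factors sitting inside the $(xyy)^{r_i}$-blocks of $u$. Each block $(xyy)^{r_i}$ collapses to $x^{r_i}$, so $u$ is transformed into $y\,u'\,y$, and hence $w = xuy$ is transformed into $x\,y\,u'\,y\,y$. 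One further application of Corollary \ref{cor:delta_invariant_under_killing_yy} deletes the trailing $yy$, giving
\[
\Delta_w(t) \;=\; \Delta_{F(xyu')}(t).
\]

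Next I would apply Corollary \ref{cor:delta_of_xyu_vs_xu} with the word $u'$ in the role of its $u$, obtaining $\Delta_{F(xyu')}(t) = t^{2}\Delta_{F(xu')}(t^{-1})$. To match this against $\Delta_{r(w)}$, note that $r(w) = xu'y$, and appending a single $y$ to $xu'$ adds no new $\pm x$-step, so Lemma \ref{lem:delta_and_g_of_a_concatenation} (with $\Delta_{F(y)}=0$) yields $\Delta_{F(xu'y)}(t) = \Delta_{F(xu')}(t)$. Chaining the three equalities gives $\Delta_w(t) = t^{2}\Delta_{r(w)}(t^{-1})$, which is the claim.

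The only nontrivial step is the bookkeeping in the first paragraph: one must absorb the trailing $y$ of $w = xuy$ into the last $xy$ of $u$ to produce the extra $yy$ that gets killed, and one must check that the internal $yy$'s inside distinct $(xyy)^{r_i}$-blocks do not overlap so that Corollary \ref{cor:delta_invariant_under_killing_yy} can be applied in sequence. Both points are immediate from the explicit form of $u$, so no further ingredients beyond those already developed should be needed.
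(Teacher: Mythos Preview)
Your proposal is correct and follows essentially the same route as the paper: write $w = xy\,(\text{middle})\,yy$, repeatedly apply Corollary~\ref{cor:delta_invariant_under_killing_yy} to strip the $yy$'s (yielding $\Delta_w = \Delta_{F(xyu')}$), then invoke Corollary~\ref{cor:delta_of_xyu_vs_xu} and note that the trailing $y$ in $r(w)=xu'y$ is invisible to $\Delta$. The paper's proof is simply the two-line compressed version of your argument, without spelling out the block decomposition.
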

\begin{proof}
	Write $w = xyuyy$ so that $r(w) = xu'y$, where $u'$ is obtained from $u$ by deleting all occurences of $yy$. The claim follows from repeated applications of Corollary \ref{cor:delta_invariant_under_killing_yy}, and Corollary \ref{cor:delta_of_xyu_vs_xu}.
\end{proof}

The coefficient sequence of a polynomial $f(t^{-1})$ is obtained by reversing the coefficient sequence of $f(t)$. Lemma \ref{lem:delta_of_yy_reduction} thus implies that the coefficient sequence of $\Delta_w(t)$ is the reverse of $\Delta_{r(w)}(t)$, as is pictured in Figure \ref{fig:yy-reduction}.

\begin{lemma}
	\label{lem:relations_of_degrees_of_dots_for_g}
	Let $w$ be a Christoffel word and let $g_i = (a_i,b_i)$ be the dots contributing to $G_w(t)$. The degrees are $\deg(g_1) = 0$ and $\deg(g_{i+1}) = \deg(g_i) - (-1)^{b_i}$.
\end{lemma}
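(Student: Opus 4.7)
The plan is to analyze the folded Christoffel path directly at each $yy$-factor of $w$. The essential structural ingredient is the standard fact that a Christoffel word of slope $p/q > 1$ contains no $xx$-factor; that is, all maximal $x$-runs have length one. This holds because for a Sturmian word of slope $p/q$ the $x$-run lengths take values in $\{\lfloor q/p \rfloor, \lceil q/p \rceil\}$, both of which equal $1$ when $p > q$. Since $G_w(t) \neq 0$ forces the slope of $w$ to exceed one, I may use this throughout.

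For the base case $\deg(g_1) = 0$, let $u$ denote the prefix of $w$ ending at the first $y$-step of the first $yy$-factor. Since $w$ starts with $x$, contains no $xx$, and $u$ itself contains no $yy$, the word $u$ must equal $(xy)^m$ for some $m \geq 1$. A direct trace of the folded path shows that $F(u)$ terminates at $(1,m)$ when $m$ is odd and at $(0,m)$ when $m$ is even. In either parity, applying the shift $s(\cdot, m)$ produces the point $(0,m)$, so $\deg(g_1) = 0$.

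For the recursion, fix consecutive dots $g_i$ and $g_{i+1}$ and let $(\alpha_i, b_i)$ denote the unshifted terminal of the first $y$-step of $g_i$'s $yy$-factor, so that $a_i = \alpha_i$ if $b_i$ is even and $a_i = \alpha_i - 1$ if $b_i$ is odd. Either the two $yy$-factors overlap in a common maximal $y$-run, in which case $\alpha_{i+1} = \alpha_i$ and $b_{i+1} = b_i + 1$ has parity opposite to $b_i$; or, invoking the no-$xx$ fact, they lie in distinct $y$-runs separated by exactly one $x$-step. In the latter situation the $x$-step occurs at height $b_i + 1$, hence translates the $x$-coordinate by $-(-1)^{b_i}$, so that $\alpha_{i+1} = \alpha_i - (-1)^{b_i}$ and $b_{i+1} = b_i + 2$ shares the parity of $b_i$. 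A short case-split on the parity of $b_i$ in each of the two cases produces $a_{i+1} - a_i = -(-1)^{b_i}$, which is the claimed recursion.

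The only genuine obstacle is establishing the no-$xx$ structural fact. This is classical for Christoffel words of slope greater than one; if preferred, it can be cited from the theory of Sturmian words or derived directly from the palindromic-closure characterization of Fact \ref{fact:palindromic_char_of_Christoffel}.
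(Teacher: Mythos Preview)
Your approach is essentially the paper's: both proofs invoke the no-$xx$ property of a Christoffel word containing $yy$ (the paper cites Lemma~2.10 of \cite{berstelcombinatorics}), and both compute the prefix before the first $yy$ and the factor between consecutive occurrences of $yy$. Your base case with $u=(xy)^m$ is correct and agrees with the paper.

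The recursion step, however, has a real gap. Your dichotomy claims that when $g_i$ and $g_{i+1}$ lie in distinct maximal $y$-runs, those runs are separated by \emph{exactly one} $x$-step, so that $b_{i+1}=b_i+2$. The no-$xx$ fact does not give this: it only forces each maximal $x$-run to have length one, and between the $y$-run of $g_i$ and that of $g_{i+1}$ there may sit several $y$-runs of length one. In general the factor of $w$ between consecutive $yy$'s has the form $yy(xy)^ny$ for some $n\geq 0$, not just $n\in\{0,1\}$. A concrete instance is $w=x\Pal(yxxyy)y$, where the first two $yy$-factors are separated by $xyx$, i.e.\ $n=2$, and $b_2=b_1+3$. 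The paper's proof covers all $n$ by splitting on the parity of $n$ and on the parity of $b_i$ (the four subcases of Figure~\ref{fig:proof_of_technical_lemma_for_G}). Your argument is easily completed along the same lines: the $n$ horizontal steps sit at heights $b_i+1,\dots,b_i+n$ and contribute alternating signs, so only the parity of $n$ matters; combining this with the shift at $b_{i+1}=b_i+n+1$ yields the claimed recursion in every case.
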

\begin{proof}
	The statement is vacuous if $G_w(t) = 0$. So assume the Christoffel word $w$ contains the factor $yy$ at least once. This implies (see Lemma 2.10 of \cite{berstelcombinatorics}) that $xx$ does not occur in $w$. Thus factors of $w$ between consecutive occurences of $yy$ \footnote{Note that e.g. the word $yyy$ has two occurences of $yy$.} must be of the form $yy(xy)^ny$ for $n \geq 0$. Similarly, $xy(xy)^my$ for $m \geq 0$ must be a prefix of $w$. The claims follow from an evaluation of these cases, pictured in Figure \ref{fig:proof_of_technical_lemma_for_G}.
\end{proof}

\begin{figure}
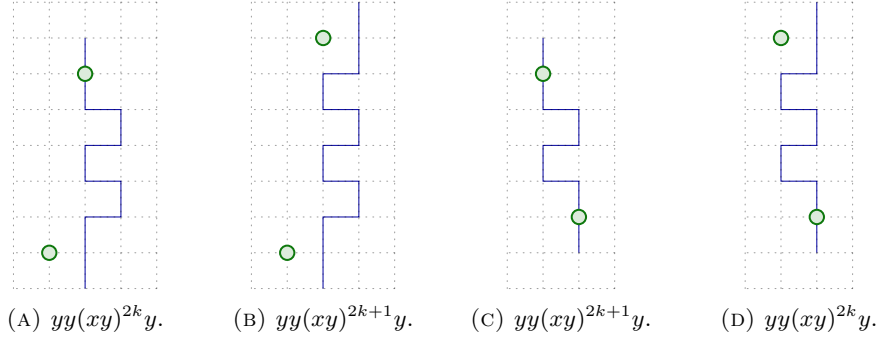

	\setbox1=\hbox{\includesvg{figures/Fig__07_LEMMA_for_G,_n_not_0,_even,_-x_step.svg}}
	\subcaptionbox{$yy(xy)^{2k}y$.\label{subfig:Lemma_degrees_of_greendots_subcase2}}[.25\textwidth]{
		\raisebox{\dimexpr\ht1-\height}{\includesvg{figures/Fig__05_Lemma_for_G,_n_not_0,_even,_+x_step.svg}}
	}%
	\subcaptionbox{$yy(xy)^{2k+1}y$.\label{subfig:Lemma_degrees_of_greendots_subcase3}}[.25\textwidth]{
		\raisebox{\dimexpr\ht1-\height}{\includesvg{figures/Fig__04_Lemma_for_G,_n_not_0,_odd,_+x_step.svg}}
	}%
	\subcaptionbox{$yy(xy)^{2k+1}y$.\label{subfig:Lemma_degrees_of_greendots_subcase6}}[.25\textwidth]{
		\raisebox{\dimexpr\ht1-\height}{\includesvg{figures/Fig__07_LEMMA_for_G,_n_not_0,_even,_-x_step.svg}}
	}%
	\subcaptionbox{$yy(xy)^{2k}y$.\label{subfig:Lemma_degrees_of_greendots_subcase5}}[.25\textwidth]{
		\raisebox{\dimexpr\ht1-\height}{\includesvg{figures/Fig__06_LEMMA_for_G,_n_not_0,_odd,_-x_step.svg}}
	}
	\caption{Proof of Lemma \ref{lem:relations_of_degrees_of_dots_for_g}. A dot $g = (a,b) \in SC_w$ lies on the folded path (is shifted left) if $b$ is even (odd, respectively).\\
	\ref{subfig:Lemma_degrees_of_greendots_subcase2},\ref{subfig:Lemma_degrees_of_greendots_subcase3}: $\deg(g_{i+1}) = \deg(g_i) + 1$, and
	\ref{subfig:Lemma_degrees_of_greendots_subcase6},\ref{subfig:Lemma_degrees_of_greendots_subcase5}: $\deg(g_{i+1}) = \deg(g_i) - 1$.
	\label{fig:proof_of_technical_lemma_for_G}}
\end{figure}

\subsection{Proof of the Main Theorem}
\label{subsec:proof_delta_log_concave}
We are now almost ready to tackle the proof of log-concavity of the polynomial $\Delta_w(t)$. Let $w = x\Pal(v)y$ be a Christoffel word. Our proof will be by induction on the length of $v$. In the inductive step of the argument,  Corollary \ref{cor:ratio_dominance_for_delta} is applied if $v = xv'$ and either Proposition \ref{prop:delta_of_yxv_reflection} or \ref{prop:g_as_delta_n_geq_2} if $v = yv'$. Informally, the conclusions and assumptions of Corollary \ref{cor:ratio_dominance_for_delta} versus Proposition \ref{prop:delta_of_yxv_reflection} or \ref{prop:g_as_delta_n_geq_2} play ``ping-pong'' in our inductive argument - the assumptions required for log-concavity of $\Delta_w(t)$ in Corollary \ref{cor:ratio_dominance_for_delta} follow from the conclusion of Proposition \ref{prop:delta_of_yxv_reflection} or \ref{prop:g_as_delta_n_geq_2}, and vice versa.

\begin{figure}[h]
	\centering
	\includesvg{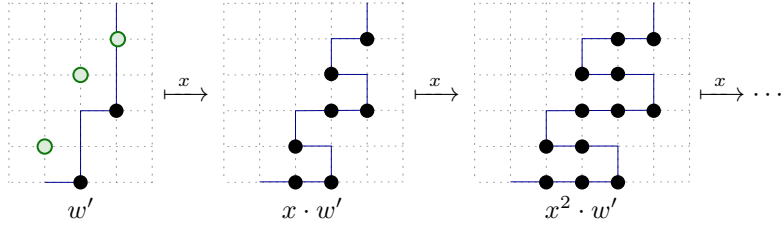}
	\caption{A horizontal segment of $w'$ has length $0$ or $1$. The corresponding horizontal segment of $w = x^n \cdot w'$ has length $n$ or $n+1$ and its leftmost point agrees with that of $w'$.}
	\label{fig:proof_of_alex_as_sum_convolution}
\end{figure}

\begin{proposition}
	\label{prop:alex_as_sum_of_convolution}
	Let $w' = x\Pal(yv')y$ for some $v' \in \{x,y\}^*$ and let $n \geq 1$. If $w = x^n \cdot w'$, where the action of $\{x,y\}^*$ on Christoffel words is as defined in Equation \ref{eq:monoid_action_on_christoffel}, then
	$$\Delta_w(t) = (1 + \dots + t^n)\Delta_{w'}(t) + (1 +  \dots + t^{n-1}) tG_{w'}(t).$$
\end{proposition}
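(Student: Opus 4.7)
The plan is to decompose the folded Christoffel path $F(w)$ of $w = x^n \cdot w'$ into pieces indexed by the horizontal segments of $F(w')$. The Christoffel word $w' = x\Pal(yv')y$ has all of its horizontal segments --- meaning maximal runs of horizontal steps in $F(w')$, together with the length-zero horizontal segments occurring between the two $y$-steps of a $yy$-factor --- of length $0$ or $1$. As Figure~\ref{fig:proof_of_alex_as_sum_convolution} indicates, the action of $x^n$ transforms each such segment into a segment of length $n$ or $n+1$ respectively, preserving its leftmost lattice point. Granting this, every terminal dot of $F(w')$ sits above a length-$1$ segment and every shifted centre in $SC_{w'}$ sits above a length-$0$ segment, so a segment-by-segment accounting of the dots of $F(w)$ naturally matches the two sums on the right-hand side of the identity.

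The first step is to establish this leftmost-point-preservation claim. I would argue by induction on $|v'|$, using the recursion $\Pal(xv) = (\Pal(v)x)^+$. In the base case $v'=\epsilon$ one has the explicit formula $\Pal(x^ny) = (yx)^n y$, from which the horizontal segments of $F(w)$ can be read off directly and compared with those of $F(w') = F(xyy)$. For the inductive step, extending $v'$ by one letter $z\in\{x,y\}$ amounts to taking a right palindromic closure; a case analysis on $z$ and on the parity of the current terminal height shows that this closure lengthens the already-existing horizontal segments without moving their leftmost endpoints.

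The second step is the combinatorial accounting. A length-$1$ segment of $F(w')$ with leftmost point $(a,b)$ contributes the monomial $t^{a+1}$ (if $b$ is even) or $t^{a}$ (if $b$ is odd) to $\Delta_{w'}$. The corresponding length-$(n+1)$ segment of $F(w)$ contributes $n+1$ dots whose degrees form an arithmetic progression starting at the same monomial, summing to $(1+t+\dots+t^n)$ times the original contribution. Likewise, a length-$0$ segment of $F(w')$ at $(a,b)$ corresponds to a $yy$-factor of $w'$ whose shifted centre contributes $t^a$ or $t^{a-1}$ to $G_{w'}$; its length-$n$ successor in $F(w)$ contributes $(1+t+\dots+t^{n-1})$ times $t$ times that monomial. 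Assembling these contributions via repeated application of Lemma~\ref{lem:delta_and_g_of_a_concatenation}, with the shifts dictated by the terminal heights of the intermediate folded paths, gives exactly $(1+\dots+t^n)\Delta_{w'}(t) + (1+\dots+t^{n-1})tG_{w'}(t)$.

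The main obstacle is the leftmost-point-preservation step: it is not transparent from the algebraic recursion for $\Pal$ that iterated palindromic closure elongates horizontal segments from the right rather than shifting them, and handling the various parity cases cleanly will take some care. Once that is in hand the remainder of the argument is a routine translation of the geometric picture into polynomial identities via Lemma~\ref{lem:delta_and_g_of_a_concatenation}.
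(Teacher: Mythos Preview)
Your strategy is exactly the paper's: split $F(w)$ into its horizontal segments, use that applying $x^n$ turns each length-$0$ or length-$1$ segment of $F(w')$ into one of length $n$ or $n+1$ with the same leftmost lattice point, and then tally the dots segment by segment. The paper does not argue the leftmost-point claim carefully either---it simply refers to Figure~\ref{fig:proof_of_alex_as_sum_convolution}---so your instinct that this is the only step requiring thought is correct.

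That said, your proposed induction on $|v'|$ is more machinery than needed, and your base case is wrong: with the iterated palindromic closure processed left to right (so that $\Pal(vz)=(\Pal(v)z)^+$; the recursion as printed in Definition~\ref{def:palindromic_closure} peels from the wrong end---compare the worked example $\Pal(xyxy)=xyxxyxyxxyx$), one has $\Pal(x^ny)=x^nyx^n$, not $(yx)^ny$. More importantly, the leftmost-point claim admits a direct one-line justification that replaces your induction entirely. Write $\ell_0,\ell_1,\dots$ for the horizontal segment lengths at heights $0,1,\dots$ in the folded path; then the leftmost $x$-coordinate at height $h$ equals $\sum_{0\le j<\lceil h/2\rceil}(\ell_{2j}-\ell_{2j+1})$. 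Since the action of $x^n$ replaces every $\ell_i$ by $\ell_i+n$ (including $\ell_0$: the leading $x$ of $w'$ merges with the initial $x^n$ of $u=x^n\cdot u'$), each difference $\ell_{2j}-\ell_{2j+1}$ is unchanged, and hence so is every leftmost coordinate. No parity cases, no palindromic-closure analysis.

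For the final assembly the paper sums directly via Equation~\eqref{eq:polynomial_as_dots} rather than routing through Lemma~\ref{lem:delta_and_g_of_a_concatenation}; either works, but once the per-segment contributions are already written as $t^{\deg(d)}(1+\cdots+t^n)$ and $t^{\deg(d)}(t+\cdots+t^n)$, summing over $d\in T_{w'}$ and $d\in SC_{w'}$ is the shorter path.
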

\begin{proof}
	Let $w = x^n \cdot w' = x\Pal(x^nyv')y$. Recall that $x^n$ acts on $u' = \Pal(yv')$ by increasing the length of horizontal segments of $u'$ by $n$. The horizontal segments of $u' = \Pal(yv')$ have lengths either $0$ or $1$, as the the first letter of $yv'$ is $y$. Therefore the horizontal segments of $u = x^n \cdot u'$ have lengths either $n$ or $n+1$. Further, the position of the leftmost point on each horizontal segment of $w$ agrees with the position of the leftmost point on the corresponding segment of $w'$, see Figure \ref{fig:proof_of_alex_as_sum_convolution}.

	First consider the horizontal segments of $w$ of length $n$. These correspond to horizontal segments of $w'$ of length $0$, which are in bijective correspondence with occurences of the factors $yx^0y = yy$ in $w'$, and thus in bijective correspondence with the dots contributing to $G_{w'}(t)$. Let $d$ be the dot in the folded Christoffel path for $w'$ under this correspondence. The $n$ dots on the horizontal segment of $w$ have degrees $\deg(d)+1, \dots, \deg(d)+n$ and therefore contribute $$t^{\deg(d)} (t+ \dots + t^n)$$ to $\Delta_w(t)$.
	\footnote{Note that the purpose of the shift in the definition of $G_w(t)$ is apparent here, namely to account for whether a horizontal segment consists of $x$-steps or $-x$-steps.}

	Next consider the horizontal segments of $w$ of length $n+1$. These correspond to horizontal segments of $w'$ of length $1$, i.e. to the $\pm x$-steps of $w'$, and hence bijectively to the dots contributing to $\Delta_w'(t)$. Let $d$ be the corresponding dot in the folded Christoffel path of $w'$. The degrees of the $n+1$ dots on the horizontal segment of $w$ are $\deg(d), \dots, \deg(d)+n$ and thus their contributions to $\Delta_w(t)$ are $$t^{\deg(d)} (1 + \dots + t^n).$$
	The claim now follows from Equation \ref{eq:polynomial_as_dots}, as
	\begin{align*}
		\Delta_w(t) &= \sum_{d \in T_w} t^{\deg(d)} \\
					&= \sum_{d \in T_{w'}} t^{\deg(d)} (1 + \dots + t^n) + \sum_{d \in SC_{w'}} t^{\deg(d)} (t+ \dots + t^n) \\
					&= (1 + \dots + t^n)\Delta_{w'}(t) + (1 +  \dots + t^{n-1}) tG_{w'}(t). \qedhere
	\end{align*}
\end{proof}

\begin{cor}
	\label{cor:ratio_dominance_for_delta}
	Let $w' = x\Pal(yv')y$ for some $v' \in \{x,y\}^*$. If $\Delta_{w'} \lesssim t G_{w'}$, then $$\Delta_{x^n \cdot w'}(t) \lesssim \Delta_{x^{n+1} \cdot w'}(t).$$
\end{cor}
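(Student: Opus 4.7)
The plan is to expand both sides via Proposition \ref{prop:alex_as_sum_of_convolution} and then invoke Fact \ref{fact:convolution_ratio_dominance_and_lexiocographic}. Writing $[k]_t = 1 + t + \cdots + t^{k-1}$ for the usual $t$-analogue, Proposition \ref{prop:alex_as_sum_of_convolution} yields
\begin{align*}
\Delta_{x^n \cdot w'}(t) &= [n+1]_t \, \Delta_{w'}(t) + [n]_t \, tG_{w'}(t), \\
\Delta_{x^{n+1} \cdot w'}(t) &= [n+2]_t \, \Delta_{w'}(t) + [n+1]_t \, tG_{w'}(t).
\end{align*}

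To apply Fact \ref{fact:convolution_ratio_dominance_and_lexiocographic}, I would set $\mathcal{W}_1 = \Delta_{w'}$ and $\mathcal{W}_2 = tG_{w'}$, in which case the hypothesis $\Delta_{w'} \lesssim tG_{w'}$ is exactly the ratio-dominance relation $\mathcal{W}_1 \lesssim \mathcal{W}_2$ required on the weights. Letting $\mathcal{A}_1, \mathcal{A}_2$ denote the coefficient sequences of $[n+1]_t$ and $[n]_t$, and $\mathcal{B}_1, \mathcal{B}_2$ those of $[n+2]_t$ and $[n+1]_t$, the two expansions above read $\sum_i \mathcal{W}_i * \mathcal{A}_i = \Delta_{x^n \cdot w'}$ and $\sum_i \mathcal{W}_i * \mathcal{B}_i = \Delta_{x^{n+1} \cdot w'}$. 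The conclusion of the Fact then delivers exactly the desired relation $\Delta_{x^n \cdot w'} \lesssim \Delta_{x^{n+1} \cdot w'}$.

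The remaining task is to verify the two lexicographic conditions (\ref{eq:lexiocographic_function_1}) and (\ref{eq:lexiocographic_function_2}) for these $\mathcal{A}_i, \mathcal{B}_i$. Since each of these sequences is a consecutive block of $1$s (of lengths $n+1$, $n$, $n+2$, $n+1$ respectively) followed by $0$s, in the bulk all ratios $b_{i,t}/a_{i,t}$ and $a_{i,t-1}/b_{i,t}$ equal $1$; only the boundary transitions at $t$ near $n$ and $n+1$ require attention. I expect the main obstacle to be the bookkeeping at these boundary indices, and in particular correctly identifying which of the three exceptional clauses ($a=b=0$, $c=d=0$, or $b=d=0$) from the zero-division convention preceding Fact \ref{fact:convolution_ratio_dominance_and_lexiocographic} applies to each edge case; once this is done, each boundary inequality reduces to a one-line check.
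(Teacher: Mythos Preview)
Your proposal is correct and follows essentially the same route as the paper's own proof: expand both $\Delta_{x^n\cdot w'}$ and $\Delta_{x^{n+1}\cdot w'}$ via Proposition~\ref{prop:alex_as_sum_of_convolution}, take $\mathcal{W}_1=\Delta_{w'}$, $\mathcal{W}_2=tG_{w'}$ (so that the hypothesis supplies $\mathcal{W}_1\lesssim\mathcal{W}_2$), and apply Fact~\ref{fact:convolution_ratio_dominance_and_lexiocographic} with $\mathcal{A}_i,\mathcal{B}_i$ the coefficient sequences of the relevant $t$-integers. The paper dispatches the lexicographic conditions in a single line by observing that every ratio that arises is of the form $\tfrac{1}{1}\le\tfrac{1}{1}$, $\tfrac{1}{1}\le\tfrac{0}{0}$, or $\tfrac{1}{0}\le\tfrac{0}{0}$, exactly the boundary bookkeeping you anticipate.
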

\begin{proof}
	We apply Fact \ref{fact:convolution_ratio_dominance_and_lexiocographic}. By the previous lemma,
	\begin{align*}
		\Delta_{x^n \cdot w'}(t) &= \underbrace{(1 + \dots + t^n)}_{A_1} \underbrace{\Delta_{w'}(t)}_{W_1} + \underbrace{(1 +  \dots + t^{n-1})}_{A_2} \underbrace{t G_{w'}(t)}_{W_2}, \mbox{~and~} \\
		\Delta_{x^{n+1} \cdot w'}(t) &= \underbrace{(1 + \dots + t^{n+1})}_{B_1} \underbrace{\Delta_{w'}(t)}_{W_1} + \underbrace{(1 +  \dots + t^{n})}_{B_2} \underbrace{t G_{w'}(t)}_{W_2}.
	\end{align*}
	By assumption, $W_1 \lesssim W_2$. The equations \ref{eq:lexiocographic_function_1} and \ref{eq:lexiocographic_function_2} are of the forms
	\begin{equation*}
		\frac{1}{1} \leq \frac{1}{1}, \mbox{~or~} \frac{1}{1} \leq \frac{0}{0}, \mbox{~or~}\frac{1}{0} \leq \frac{0}{0},
	\end{equation*}
	and therefore satisfied (see the remark immediately preceeding Fact \ref{fact:convolution_ratio_dominance_and_lexiocographic}).
\end{proof}

The following two propositions explain the relation between $\Delta_w(t)$ and $G_w(t)$ for Christoffel words $w$ of slope $1 < \frac{p}{q}$, i.e. precisely those Christoffel words satisfying $G_w(t) \neq 0$. Proposition \ref{prop:delta_of_yxv_reflection} treats the case of Christoffel words of slope $1 < \frac{p}{q} < 2$ and Proposition \ref{prop:g_as_delta_n_geq_2} those of slope $2 < \frac{p}{q}$.

\begin{proposition}
	\label{prop:delta_of_yxv_reflection}
	Let $w = x\Pal(yxv)y$ for some $v \in \{x,y\}^*$. Then
	\begin{equation*}
		\Delta_w(t) = t^2 \Delta_{x \cdot w'} (t^{-1}) \mbox{~and~} G_w(t) = t \Delta_{w'}(t^{-1}),
	\end{equation*}
	where $w' = x\Pal(E(v))y$. In particular, the coefficient sequences of $\Delta_w(t)$ and $G_w(t)$ are the reverses of the coefficient sequences of $\Delta_{w'}(t)$ and $\Delta_{x \cdot w'}(t)$.
\end{proposition}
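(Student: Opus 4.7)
The first equation $\Delta_w(t) = t^2 \Delta_{x \cdot w'}(t^{-1})$ is essentially immediate from the $yy$-reduction machinery already developed. By Lemma \ref{lemma:word_of_yy_reduction}, $r(w) = x\Pal(xE(v))y$, and by the definition of the extended monoid action on Christoffel words in Equation \ref{eq:monoid_action_on_christoffel} this equals $x \cdot (x\Pal(E(v))y) = x \cdot w'$. Combining with Lemma \ref{lem:delta_of_yy_reduction}, we obtain $\Delta_w(t) = t^2 \Delta_{r(w)}(t^{-1}) = t^2 \Delta_{x \cdot w'}(t^{-1})$.

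The identity $G_w(t) = t\Delta_{w'}(t^{-1})$ is more delicate, and the plan is to exhibit an order-preserving bijection between the dots contributing to $G_w$ and those contributing to $\Delta_{w'}$, with corresponding degrees summing to $1$. I would invoke the structural decomposition $u = y B_1\, xy\, B_2\, xy \cdots B_n\, xy$ with $B_i = (xyy)^{r_i}$ from the proof of Lemma \ref{lemma:word_of_yy_reduction}, along with the concomitant form $w' = x^{r_1+1}y\, x^{r_2} y \cdots y\, x^{r_n} y$. One checks that both sides have $\sum_{i=1}^n r_i + 1$ dots: on the $G$-side, the $r_i$ occurrences of $yy$ inside each $B_i$ plus one from the trailing $xy \cdot y = xyy$ at the end of $w = xuy$; on the $\Delta$-side, the $r_1+1$ steps in the first $x$-run and the $r_k$ steps in each subsequent $x$-run. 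Enumerating both sets in order of appearance along the respective paths as $Y_1, \ldots, Y_m$ and $X_1, \ldots, X_m$, the desired identity reduces to the pointwise claim $\deg(Y_k)+\deg(X_k) = 1$ for every $k$.

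I would prove this by induction on $k$. The base case is immediate: $X_1$ ends at $(1,0)$ and so has degree $1$, while $\deg(Y_1)=0$ by Lemma \ref{lem:relations_of_degrees_of_dots_for_g}. For the inductive step it suffices to show that $\deg(Y_{k+1})-\deg(Y_k)$ and $\deg(X_{k+1})-\deg(X_k)$ have opposite signs. Lemma \ref{lem:relations_of_degrees_of_dots_for_g} gives $\deg(Y_{k+1})-\deg(Y_k) = -(-1)^{b_k}$, where $b_k$ is the height of the un-shifted center of $Y_k$. A direct inspection of the folded path of $w'$ gives uniformly $\deg(X_{k+1})-\deg(X_k) = (-1)^{h'_k}$, where $h'_k$ is the height of $X_{k+1}$, regardless of whether $X_k, X_{k+1}$ lie in the same $x$-run or straddle consecutive $x$-runs (possibly skipping runs of length zero). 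The two differences cancel precisely when $b_k \equiv h'_k \pmod 2$.

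The crux is verifying this last parity coincidence, and this is the main obstacle of the proof. Writing $R_i = \sum_{j \leq i} r_j$, one sees directly from the structural decomposition that for $k\geq 1$ both of the conditions ``$Y_k \in B_i$'' and ``$X_{k+1}$ lies in the $i$-th $x$-run of $w'$'' are equivalent to $R_{i-1} < k \leq R_i$, and so they occur together: whenever $Y_k$ lies in $B_i$, $X_{k+1}$ lies in the $i$-th $x$-run of $w'$. Consequently $h'_k = i-1$, while the recursion $H_i = H_{i-1} + 2r_i + 1$ for the height of the folded path of $w$ after each segment $B_i \cdot xy$ forces $b_k \equiv i+1 \pmod 2$. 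Since $i-1 \equiv i+1 \pmod 2$, the required parity match holds, and this uniform analysis covers all scenarios (same block, skipping empty blocks $B_i$ or empty $x$-runs, and the trailing $yy$) at once.
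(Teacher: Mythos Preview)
Your argument is correct and follows the same overall architecture as the paper's: the first identity is obtained exactly as the paper does, via $r(w)=x\cdot w'$ and Lemma~\ref{lem:delta_of_yy_reduction}; for the second, both you and the paper set up an order-preserving bijection between the dots of $G_w$ and those of $\Delta_{w'}$ and prove $\deg(g_k)+\deg(d'_k)=1$ by induction, invoking Lemma~\ref{lem:relations_of_degrees_of_dots_for_g} on the $G$-side and a direct height computation on the $\Delta$-side.

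The genuine difference is in the parametrisation. The paper does \emph{not} recycle the $(xyy)^{r_i}$-block decomposition of Lemma~\ref{lemma:word_of_yy_reduction}; instead it writes $\Pal(xv)=x^{r_1}yx^{r_2}\cdots yx^{r_n}$ with all $r_i\ge 1$ and $\Pal(E(v))=y^{r_1-1}xy^{r_2-1}\cdots xy^{r_n-1}$. In that picture there are exactly $n$ dots on each side, one per index, and the heights come out as $b_i=\sum_{j\le i}(r_j+1)$ and $b'_{i+1}=\sum_{j\le i}(r_j-1)$, so the parity match is immediate (the two sums differ by $2i$). Your parametrisation via $u=y(xyy)^{r_1}xy\cdots(xyy)^{r_n}xy$ with $r_i\ge 0$ yields $1+\sum r_i$ dots and forces you to track cumulative sums $R_i$ and block heights $H_i$ to extract the parity, and to argue separately that empty blocks and the trailing $yy$ cause no trouble. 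Both routes arrive at the same place; the paper's choice buys a one-line parity check at the cost of introducing a second decomposition, while yours buys reuse of existing machinery at the cost of slightly heavier bookkeeping.
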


\begin{proposition}
	\label{prop:g_as_delta_n_geq_2}
	Let $w = x\Pal(y^nxv)y$ for $v \in \{x,y\}^*$ and $n \geq 2$. Then
	\begin{equation*}
		\Delta_w(t) = \Delta_{w'}(t) \mbox{~and~} t G_w(t) = \Delta_{x \cdot w'}(t),
	\end{equation*}
	where $w' = x\Pal(y^{n-2}xv)y$.
\end{proposition}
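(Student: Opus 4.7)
My plan is to establish the two identities using the structural description $u_n = y^2 \cdot u_{n-2}$ of the iterated palindromic closure, where I write $u_m := \Pal(y^m xv)$, combined with Lemma~\ref{lem:delta_and_g_of_a_concatenation} and Corollary~\ref{cor:delta_invariant_under_killing_yy}. The starting observation is that the path of $u_n$ is obtained from the path of $u_{n-2}$ by extending every vertical segment by $2$, including the length-$0$ vertical segments at the boundaries and within horizontal blocks; at the word level this means $u_n$ is $u_{n-2}$ with the factor $yy$ inserted at finitely many positions.

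For the identity $\Delta_w(t) = \Delta_{w'}(t)$, I apply Corollary~\ref{cor:delta_invariant_under_killing_yy} once per $yy$-insertion to obtain $\Delta_{F(u_n)} = \Delta_{F(u_{n-2})}$. Decomposing $w = x \cdot u_n \cdot y$ and $w' = x \cdot u_{n-2} \cdot y$ via Lemma~\ref{lem:delta_and_g_of_a_concatenation}, and noting that a terminal $y$-step contributes no dot to $\Delta$, yields $\Delta_w = t + t\Delta_{F(u_n)}$ and $\Delta_{w'} = t + t\Delta_{F(u_{n-2})}$, which are equal. An immediate by-product is that the parities of heights at every point of the folded path shift by an even amount between $u_{n-2}$ and $u_n$, so the $+x/-x$ type of each horizontal step is preserved; I will reuse this parity statement for the second identity.

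For the identity $tG_w(t) = \Delta_{x\cdot w'}(t)$, set $u'' := x \cdot u_{n-2} = \Pal(xy^{n-2}xv)$. Lemma~\ref{lem:delta_and_g_of_a_concatenation} gives $\Delta_{x\cdot w'} = t + t\Delta_{F(u'')}$, so the identity reduces to $G_w = 1 + \Delta_{F(u'')}$. I plan to prove this by induction on $|v|$. For the base case $v = \epsilon$, one has $u_n = y^n x y^n$, so $w = xy^nxy^{n+1}$ and $u'' = \Pal(xy^{n-2}x)$, and the identity can be verified by tracing both folded paths explicitly and matching term-by-term, where the constant $1$ in $1 + \Delta_{F(u'')}$ is accounted for by the fact that the first $yy$-pair of $w$ (at positions $2,3$) always has shifted center $(0,1)$. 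For the inductive step $v \mapsto vz$ with $z \in \{x,y\}$, I use the recursion $\Pal(\alpha z) = (\Pal(\alpha) z)^{+} = \Pal(\alpha) \cdot z \cdot \tilde{P}_{\alpha,z}$ to express each of $u_n(vz),\, u_{n-2}(vz),\, u''(vz)$ as its predecessor concatenated with a palindromic tail $z\tilde{P}$. Lemma~\ref{lem:delta_and_g_of_a_concatenation}, together with Corollary~\ref{cor:delta_of_xyu_vs_xu} to handle the sign-reversal $\Delta_{F(xyu)}(t) = t^2\Delta_{F(xu)}(t^{-1})$ that appears in the reflected tails, then decomposes $G_{w(vz)}$ and $\Delta_{F(u''(vz))}$ each into an ``old'' part (handled by the inductive hypothesis) plus tail contributions that can be matched directly by comparing the palindromic suffixes on the three related words.

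The main technical obstacle is parity bookkeeping in the inductive step: the shifted-center operation defining $G_w$ depends on the parity of the height $j$ of the terminal of the first $y$ of a pair, while the sign of a $\pm x$-step in $\Delta_{F(u'')}$ depends on the parity of height at that step, and these must be shown to align across $u_n$, $u_{n-2}$, and $u''$. Parity preservation under the $y^2$-action (from the first identity) does half the work; for the other half I expect a short case analysis on $z \in \{x,y\}$ and on whether the new palindromic suffix $S_z$ ends with $x$ or $y$, which controls whether the tail contributes via the ``$b$ even'' or ``$b$ odd'' branch of Lemma~\ref{lem:delta_and_g_of_a_concatenation}.
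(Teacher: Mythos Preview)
Your treatment of the first identity is correct and essentially the same as the paper's: both argue that $w'$ arises from $w$ by deleting finitely many occurrences of $yy$, and then invoke Corollary~\ref{cor:delta_invariant_under_killing_yy}.

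For the second identity your route is genuinely different from the paper's, and as written it has a real gap. The paper does \emph{not} induct on $|v|$. Instead it writes $\Pal(xv)=x^{r_1}yx^{r_2}\cdots yx^{r_m}$, uses this to factor $w$ as a product $\prod_j (xy^{i_j})$ with each $i_j\in\{n,n+1\}$, verifies the atomic identity $tG_{F(xy^i)}(t)=\Delta_{F(x(xy)^{i-2})}(t)$ by a direct picture, and then inducts on the number of factors via Lemma~\ref{lem:delta_and_g_of_a_concatenation}. A separate algebraic computation shows that the word obtained by replacing each $xy^i$ by $x(xy)^{i-2}$ is exactly $x\Pal(xy^{n-2}xv)y=x\cdot w'$. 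The point is that the induction runs over a decomposition of $w$ into blocks that are \emph{independent of the palindromic structure of $v$}, so no comparison of palindromic suffixes is ever needed.

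Your induction on $|v|$ asks for something harder. In the step $v\mapsto vz$ you must relate the palindromic tails $S_n$, $S_{n-2}$, $S''$ appended to $u_n(v)$, $u_{n-2}(v)$, $u''(v)$, but these tails are determined by the longest palindromic suffixes of three \emph{different} words $u_n(v)z$, $u_{n-2}(v)z$, $u''(v)z$, and you give no argument that they are related in a way that makes the $G$-contribution of $S_n$ equal the $\Delta$-contribution of $S''$. (Concretely, one would need something like Justin's formula for $\Pal(\alpha z)$ together with a lemma saying that the relevant prefixes of $y^nxv$, $y^{n-2}xv$, $xy^{n-2}xv$ ending in $z$ are compatible; this is not free, and your proposal does not supply it.) The phrase ``tail contributions that can be matched directly'' is exactly where the proof is missing. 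The parity bookkeeping you flag is a secondary issue; the primary one is that the three tails are not shown to correspond at all. If you want to salvage the approach, you should either prove a lemma identifying $S''$ with the appropriate transform of $S_n$, or switch to the paper's block factorisation, which sidesteps the problem entirely.
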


\missingfigure{Create a Figure with a suitable picture for either Proposition \ref{prop:delta_of_yxv_reflection} or \ref{prop:g_as_delta_n_geq_2}.}

\begin{proof}[Proof of Proposition \ref{prop:delta_of_yxv_reflection}]
	We begin by showing that $\Delta_w(t) = t^2 \Delta_{x \cdot w'} (t^{-1})$. Since $x \cdot w' = x\Pal(xE(v))y= r(w)$, the $yy$-reduction of $w$, this claim follows from Lemma \ref{lem:delta_of_yy_reduction}.
	It remains to prove that $G_w(t) = t \Delta_{w'}(t^{-1})$. Let $n \geq 1$, let $r_i \geq 1$ for $1 \leq i \leq n$ and write
	\begin{align*}
		\Pal(yxv) &= y \cdot \left ( x^{r_1}yx^{r_2}y \cdots yx^{r_n} \right ), \\
		\Pal(E(v)) &= y^{r_1-1}xy^{r_2-1}x \cdots xy^{r_n-1}.
	\end{align*}
	Let $SC_w = g_1, \dots, g_k$ be the dots contributing to $G_w(t)$ and let $T_{w'} = d'_1, \dots, d'_l$ be the dots contributing to $\Delta_{w'}(t)$. We show that $l=k$ and $\deg(g_i) = 1 - \deg(d'_i)$; Equation \ref{eq:polynomial_as_dots} then implies that $G_w(t) = t \Delta_{w'}(t^{-1})$. The number of $yy$-steps in $w = y \cdot x\Pal(xv)y$ equals the number of $y$-steps in $x\Pal(xv)y$, which is $k = n$ and the number of $x$-steps in $w' = x\Pal(E(v))y$ is $l = n$. For $i = 1$, Lemma \ref{lem:relations_of_degrees_of_dots_for_g} shows that $\deg(g_1) = 0 = 1 - \deg(d'_1)$. Let $g_i = (*, b_i)$ and $d'_i = (*, b'_i)$. By Lemma \ref{lem:relations_of_degrees_of_dots_for_g} and immediately from the definitions,
	\begin{align*}
		\deg(g_{i+1}) &= \deg(g_i) - (-1)^{b_i} = \deg(g_i) - (-1)^{\sum_{j = 1}^i (r_j + 1)}, \\
		\deg(d'_{i+1}) &= \deg(d'_i) + (-1)^{b'_{i+1}} = \deg(d'_i) + (-1)^{\sum_{j = 1}^i (r_j - 1)}.
	\end{align*}
	The claim $\deg(g_i) = 1 - \deg(d'_i)$ now follows from induction on $i$. \qedhere
\end{proof}

\begin{figure}[h]
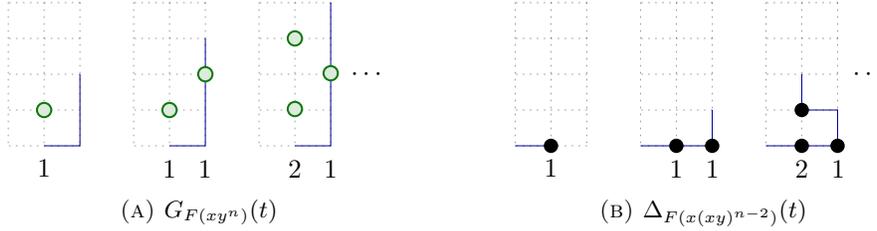

	\setbox2=\hbox{\includesvg{figures/Proof_of_main_theorem_10_G_of_xyy...y.svg}}%
	\subcaptionbox{$G_{F(xy^n)}(t)$ \label{subfig:G_of_xy^n}}[.47\textwidth]{
		\raisebox{\dimexpr\ht2-\height}{\includesvg{figures/Proof_of_main_theorem_10_G_of_xyy...y.svg}}
	}\hfill%
	\subcaptionbox{$\Delta_{F(x(xy)^{n-2})}(t)$ \label{subfig:Delta_of_x(xy)^{n-2}}}[.47\textwidth]{
		\raisebox{\dimexpr\ht2-\height}{\includesvg{figures/Proof_of_main_theorem_11_Delta_of_xxy...xy.svg}}
	}%
	\caption{$t G_{F(xy^n)}(t) = \Delta_{F(x(xy)^{n-2})(t)}$}
	\label{fig:g_as_delta_n_geq_2}
\end{figure}

\begin{proof}[Proof of Proposition \ref{prop:g_as_delta_n_geq_2}]
	Let $w,w'$ be as in the statement of the proposition. We begin by showing that $\Delta_w(t) = \Delta_{w'}(t)$. This follows from Corollary \ref{cor:delta_invariant_under_killing_yy}, as $w'$ is obtained from $w$ by deleting occurences of $yy$. The second claim of the Proposition is that $t G_w(t) = \Delta_{x \cdot w'}(t)$. Let $n \geq 2$ and $r_i \geq 1$ for $1 \leq i \leq m$ and write
	\begin{align*}
		\Pal(xv) &= x^{r_1}yx^{r_2}\cdots yx^{r_m},\\
		y^n \cdot \Pal(xv) &= (y^n x)^{r_1} y (y^n x)^{r_2} \cdots y (y^n x)^{r_m} y^n.
	\end{align*}
	This presents the Christoffel word $w = x\Pal(y^n xv)y$ as a product of factors of the form $xy^i$ for $i \geq 2$, namely as 
	\begin{align*}
		w &= x ( (y^n x)^{r_1} y (y^n x)^{r_2} \cdots y (y^n x)^{r_m} y^n ) y \\
			&= (xy^n) (xy^n)^{r_1 - 1} (xy^{n+1}) (xy^n)^{r_2 - 1}\cdots (xy^n)^{r_m - 1} (xy^{n+1}).
	\end{align*}
	Define a word $w' \in \{x,y\}^*$ by replacing each factor $xy^i$ of $w$ with $x(xy)^{i-2}$. We first employ Lemma \ref{lem:delta_and_g_of_a_concatenation} to show that $t G_w(t) = \Delta_{F(w')}(t)$ and then prove that $w' = x\Pal(y^{n-2}xv)y$. The following are easy to check, see Figure \ref{fig:g_as_delta_n_geq_2}.
	\begin{itemize}
		\item $t G_{F(xy^i)}(t) = \Delta_{F(x(xy)^{i-2})}(t)$,
		\item the terminal point of $F(xy^i)$ is $(1,i)$, and
		\item if $i$ is even [odd] then $F(x(xy^{i-2}))$ ends at $(1,i-2)$ [at $(2,i-2)$, respectively].
	\end{itemize}
	Now write $z = \prod_{i \in I} (xy^i)$ and $z' = \prod_{i \in I} (x(xy)^{i-2})$ and assume that $t G_{F(z)}(t) = \Delta_{F(z')}(t)$ for all products of at most $n = |I|$ factors. Using Lemma \ref{lem:delta_and_g_of_a_concatenation},
	\begin{align*}
		t G_{F(xy^i z)}
		&= t G_{F(xy^i)} + \begin{cases}
			t~tG_{F(z)}(t) & \text{for~} i \text{~even}\\
			t^2~t^{-1} G_{F(z)}(t^{-1}) & \text{for~} i \text{~odd}
		\end{cases} \\
		&= \Delta_{F(x(xy)^{i-2})}(t) + \begin{cases}
			t \Delta_{F(z')}(t) & \text{for~} i-2 \text{~even}\\
			t^2 \Delta_{F(z')}(t^{-1}) & \text{for~} i-2 \text{~odd}
		\end{cases} \\
		&= \Delta_{F(x(xy)^{i-2}z')}.
	\end{align*}
	The claim $t G_w(t) = \Delta_{F(w')}(t)$ now follows by induction. The final step is to show that $w' = x\Pal(y^{n-2}xv)y$. By definition, the word $w'$ is given by
	\begin{align*}
		w'	&= \left( x(xy)^{n-2} \right) {\left(x(xy)^{n-2} \right)}^{r_1 - 1}  \left(x(xy)^{n-1}\right) \cdots {\left(x(xy)^{n-2}\right)}^{r_m - 1} \left(x(xy)^{n-1} \right) \\
		&= x \left((xy)^{n-2}x \right) {\left((xy)^{n-2}x \right)}^{r_1 - 1} \left((xy)^{n-1}x \right) \cdots {\left((xy)^{n-2}x \right)}^{r_m - 1}  (xy)^{n-2}(xy) \\
		&= x \left [x \cdot \left ( y^{n-2}x \left ( y^{n-2}x \right )^{r_1 - 1} y^{n-1} \cdots \left (y^{n-2}x \right )^{r_m - 1} \right ) \right ] y \\
		&= x \left [ \left ( xy^{n-2} \right ) \cdot \left (x x^{r_1 -1}y \cdots yx x^{r_m - 1} \right ) \right ] y \\
		&= x \Pal(xy^{n-2}xv) y. \qedhere
	\end{align*}
\end{proof}

\begin{theorem}
	\label{thm:maintheorem}
	The polynomial $\Delta$ is log-concave for Christoffel words.
\end{theorem}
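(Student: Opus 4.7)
The plan is to prove log-concavity by strong induction on the length of $v$, where $w = x\Pal(v)y$, simultaneously establishing two claims:
(A) $\Delta_w(t)$ is log-concave, and
(B) $\Delta_w(t) \lesssim tG_w(t)$ whenever $G_w \neq 0$ (equivalently, whenever $v$ begins with $y$, so the slope of $w$ exceeds $1$).
The strengthening to (B) is essential because Corollary \ref{cor:ratio_dominance_for_delta} takes a statement of type (B) for a smaller Christoffel word as hypothesis and outputs a statement of type (A) at larger size. The base case $v=\epsilon$ gives $w=xy$, for which $\Delta_w=t$ and $G_w=0$, and both claims are immediate.

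If $v$ begins with $x$, either $v=x^{n+1}$, in which case $w = x^{n+2}y$ has $\Delta_w = t + t^2 + \cdots + t^{n+2}$ and $G_w=0$, so (A) is clear and (B) vacuous; or $v = x^n y v''$ with $n\geq 1$. In the latter case set $w' = x\Pal(yv'')y$: its $v$-word starts with $y$ and is strictly shorter, so the IH supplies both (A) and (B) for $w'$. Corollary \ref{cor:ratio_dominance_for_delta} then produces $\Delta_{x^k\cdot w'}\lesssim \Delta_{x^{k+1}\cdot w'}$ for every $k\geq 0$, and in particular $\Delta_w = \Delta_{x^n\cdot w'}$ is log-concave. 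Claim (B) for $w$ is vacuous since $v$ begins with $x$.

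If $v$ begins with $y$, either $v = y^n$ (a direct computation on $w = xy^{n+1}$ yields $\Delta_w = t$ and $G_w$ of the form $k + kt$ or $(k+1) + kt$ depending on parity, from which (A) and (B) follow at once), or $v = yxv''$ (Proposition \ref{prop:delta_of_yxv_reflection}), or $v = y^n x v''$ with $n\geq 2$ (Proposition \ref{prop:g_as_delta_n_geq_2}). In the last two subcases the proposition expresses $\Delta_w$ and $tG_w$ in terms of $\Delta_{w'}$ and $\Delta_{x\cdot w'}$ for a Christoffel word $w'$ with shorter $v$-word, possibly after the coefficient-reversing substitution $t \mapsto t^{-1}$. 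Claim (A) for $w$ then follows from (A) for $w'$ and $x\cdot w'$ supplied by the IH, since log-concavity is preserved under reversal. Claim (B) reduces, after cancelling a common power of $t$ and possibly reversing, to the single inequality $\Delta_{w'}\lesssim \Delta_{x\cdot w'}$. When the $v$-word of $w'$ starts with $y$ this is Corollary \ref{cor:ratio_dominance_for_delta} at $n=0$, whose hypothesis is precisely (B) for $w'$ from the IH; when the $v$-word of $w'$ starts with $x$ or is empty, the inequality either follows elementarily from $\Delta_{w'} = t(1 + t + \cdots + t^m)$ versus $\Delta_{x\cdot w'} = t(1 + \cdots + t^{m+1})$, or is obtained by further splitting $w' = x^k \cdot w''$ via Proposition \ref{prop:alex_as_sum_of_convolution} and invoking Corollary \ref{cor:ratio_dominance_for_delta} on the smaller $w''$.

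The main obstacle is precisely the ping-pong the paper warns about: claim (B) at a Christoffel word of a given size feeds Corollary \ref{cor:ratio_dominance_for_delta} to yield (A) at larger words, while (B) at the given size is itself derived via Propositions \ref{prop:delta_of_yxv_reflection} and \ref{prop:g_as_delta_n_geq_2} from a ratio-dominance relation between the $\Delta$-polynomials of two strictly smaller Christoffel words, a relation which itself is then obtained by another invocation of Corollary \ref{cor:ratio_dominance_for_delta}. The genuine work is bookkeeping: setting up the case analysis on the first two letters of $v$ so that the subword handed to each proposition lies in the scope of the IH, and isolating the boundary cases where the slope drops to or below $1$ (so $G = 0$) so that they are dispatched by a direct convolution argument. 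Once the induction is laid out in this way, every case closes mechanically from the structural results in Subsection \ref{subsec:basic_properties_of_delta}.
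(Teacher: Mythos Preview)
Your proposal is correct and follows essentially the same approach as the paper's proof: induction on $|v|$ in $w = x\Pal(v)y$, proving the strengthened claim $\Delta_w \lesssim tG_w$, with the same case split on the leading letters of $v$ and the same ping-pong between Corollary~\ref{cor:ratio_dominance_for_delta} and Propositions~\ref{prop:delta_of_yxv_reflection}--\ref{prop:g_as_delta_n_geq_2}. Your separation into claims (A) and (B) and your more explicit treatment of the sub-case where the $v$-word of $w'$ begins with $x$ (further splitting $w' = x^k \cdot w''$) are presentational refinements of what the paper does implicitly.
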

\begin{proof}
	The argument proceeds by induction on the length $|v'|$ in the expression $x\Pal(v')y$ for non-trivial Christoffel words. Specifically, we claim that $\Delta(t) \lesssim t G(t)$ for all Christoffel words.%
	\footnote{Recall that $f \lesssim g$ stipulates that both $f$ and $g$ are log-concave and that $f \lesssim 0$ is equivalent to $f$ being log-concave, as every log-concave sequence is ratio-dominated by the trivial sequence $(0,0,\cdots,0,\cdots)$.}%
	First consider $v' = x^n$ or $v' = y^n$ for $n \geq 0$. Both cases are easily dispatched with by explicit computation. We further remark that $$\Delta_{x\Pal(x^n)y}(t) = t + \dots + t^{n+1} \lesssim t + \dots + t^{n+2} = \Delta_{x\Pal(x^{n+1})y}(t).$$ Now assume $\Delta(t) \lesssim t G(t)$ for all Christoffel words $x\Pal(v')y$ whenever $|v'| \leq n$ and consider $w = x\Pal(v)y$ with $|v| = n+1$. If $v \neq x^{n+1}$ and $v \neq y^{n+1}$ then $v = x^k (yv')$ or $v = y^k (xv')$ and we thus need to consider the following cases.
	\begin{enumerate}
		\item $v = x^k yv'$. Let $w' = x\Pal(yv')y$ so that $w = x^k \cdot w'$. By induction, $\Delta_{w'}(t) \lesssim t G_{w'}(t)$ and Corollary \ref{cor:ratio_dominance_for_delta} implies that $\Delta_w(t)$ is log-concave. Observe that $G_w(t) = 0$ and therefore this is equivalent to $\Delta_w(t) \lesssim t G_w(t)$.
		\item $v = y xv'$. Let $w' = x\Pal(E(v'))y$. Note that $f(t) \lesssim g(t)$ implies $g(t^{-1}) \lesssim f(t^{-1})$ as the sequence of the ratios of the coefficients of $f(t^{-1})$ is the reverse sequence of the reciprocals of those of $f(t)$ and $x \mapsto x^{-1}$ is order-reversing. Either by the remark at the beginning of this proof (if $E(v') = x^m$) or else by induction and Corollary \ref{cor:ratio_dominance_for_delta} for the ratio-dominance, and by Proposition \ref{prop:delta_of_yxv_reflection} for the equalities,
		$$\Delta_w(t) = t^2 \Delta_{x \dot w'}(t^{-1}) \lesssim t^2 \Delta_{w'}(t^{-1}) = tG_w(t).$$
		\item $v = y^k xv'$ for $k \geq 2$. Let $w' = x\Pal(y^{k-2}xv')y$. By Proposition \ref{prop:g_as_delta_n_geq_2} for the equalities, and either by the remark above (if $n = 2$ and $v' = x^m$) or by induction and Corollary \ref{cor:ratio_dominance_for_delta} for the ratio-dominance,
		\begin{equation*}
			\Delta_w(t) = \Delta_{w'}(t) \lesssim \Delta_{x \cdot w'}(t) = t G_w(t). \qedhere
		\end{equation*}
	\end{enumerate}
\end{proof}

Theorem \ref{thm:maintheorem} does not hold for arbitrary folded paths, not even for folded paths arising from words of the form $v = xuy$ for $u$ a palindrome. For example, as is pictured in Figure \ref{fig:counterex_to_trapezoidal_for_palindromes}, the polynomial $\Delta_{F(v)}(t)$ may not even be unimodal in this case.

\begin{figure}[h]
	\centering
	\includesvg{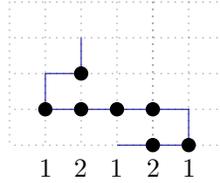}
	\caption{The polynomial $\Delta_{F(v)}(t) = t^{-2} + 2t^{-1} + 1 + 2t + t^2$ for $v = x(xyx^4yx)y$ is not unimodal.}
	\label{fig:counterex_to_trapezoidal_for_palindromes}
\end{figure}

\section{The Strong Fox Conjecture}
\label{sec:strongfox}

We return to our original question: Are Alexander polynomials of alternating knots log-concave? In this section, we give a positive answer for two-bridge knots.

Two-bridge links are alternating links parametrized by a pair of coprime integers $(p,q)$ satisfying $0 < p < q$ and $p$ odd. We write $D(p,q)$ for the corresponding two-bridge link. For example, Figure \ref{fig:two_bridge_knot_p=5,q=8} pictures the two-bridge link $D(5,8)$. If $q$ is odd, then $D(p,q)$ is a knot and if $q$ is even, then $D(p,q)$ is a 2-component link. Two-bridge knots are alternating and their branched double covers are Lens spaces.

The Alexander polynomial $\Delta_K(t) \in \mathbb{Z}[t,t^{-1}]$ of a knot $K \subset S^3$ is a polynomial defined up to multiplication by a unit in $\mathbb{Z}[t]$. It is convenient to consider the so-called balance class of the Alexander polynomial, where two polynomials $f(t),g(t)$ are said to be balanced if $\pm t^n f(t) = g(t)$ or $f(t) = \pm t^n f(t)$ for some non-negative integer $n$ \cite{MR907872}. We write $f \doteq g$ to indicate that $f,g$ are balanced.

Minkus gives the following expression for the Alexander polynomial of a two-bridge link. His argument proceeds by first finding a Wirtinger presentation of the link group of $D(p,q)$ and applying Fox calculus.
\begin{fact}[Lemma 11.1 of \cite{MR643587}]
	\label{fact:minkus-alexander}
	Let $p,q$ be coprime, non-negative integers satisfying $0 < p < q$ and $p$ odd. The Alexander polynomial of $D(p,q)$ is 
	\begin{equation}
		\label{eq:alex_using_epsilon_sequence}
		\Delta_{D(p,q)}(t) \doteq \sum_{k = 0}^{q-1} (-1)^k t^{\sum_{i=0}^k \epsilon_i},
	\end{equation}
	where $\epsilon_i = (-1)^{\lfloor \frac{ip}{q} \rfloor}$ and $\lfloor x \rfloor$ is the largest integer less than or equal to $x$.
\end{fact}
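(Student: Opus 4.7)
The plan is to follow Minkus's original strategy, which is to write down a presentation of the link group, then crank the handle of Fox calculus. First I would use Schubert's normal form of $D(p,q)$ (as pictured in Figure \ref{fig:two_bridge_knot_p=5,q=8}) to extract a Wirtinger presentation. Because the diagram has exactly two bridges, Tietze moves collapse the Wirtinger presentation to one with only two generators $a,b$ (meridians of the two bridges) and a single relation $R = 1$. Each of the $q$ crossings along a bridge is either an over- or an under-crossing with $a$ versus $b$, and the choice at the $i$-th crossing is governed by the parity of $\lfloor ip/q \rfloor$; this is exactly the sign $\epsilon_i$. So $R$ can be written as an alternating product whose letters are conjugates of $a$ or $b$ according to the $\epsilon_i$ pattern.

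Next I would compute the Alexander polynomial as the Fox derivative $\partial R/\partial a$ (or $\partial R/\partial b$), evaluated under the abelianization $a,b \mapsto t$. Using the product rule for Fox derivatives, $\partial(uv)/\partial a = \partial u/\partial a + u\,\partial v/\partial a$, together with $\partial a^{-1}/\partial a = -a^{-1}$, the derivative telescopes into a sum indexed by the $q$ letters of $R$. The sign $(-1)^k$ in the formula comes from the alternation of $a$'s and $b$'s (contributions from $b$-letters vanish under $\partial/\partial a$ except for a sign carried by the surrounding $a^{-1}$'s), and the exponent $\sum_{i=0}^{k}\epsilon_i$ is precisely the abelianized image of the prefix of $R$ up to position $k$, which records the net algebraic linking accumulated while traversing the bridge.

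Finally I would confirm that the result is the Alexander polynomial, up to the balance equivalence $\doteq$, by appealing to the standard fact that for a 1-relator presentation of a knot group $\langle a,b \mid R \rangle$, the Alexander polynomial equals the abelianized Fox derivative with respect to either generator, up to a unit of $\mathbb{Z}[t,t^{\pm1}]$.

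The main obstacle is purely bookkeeping, namely writing out $R$ explicitly in terms of $a$ and $b$ (using Schubert's normal form and the floor-function pattern) and tracking how the prefixes abelianize so that the exponents line up with $\sum_{i=0}^{k}\epsilon_i$. There is no new idea required beyond Fox calculus, but the combinatorial identification of the relation's letters with the sign sequence $\epsilon_i$ must be done carefully, and it is this identification that Minkus carries out in Lemma 11.1 of \cite{MR643587}.
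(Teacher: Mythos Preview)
Your proposal is correct and matches the approach the paper attributes to Minkus: the paper does not itself prove this fact but simply cites it, remarking that ``His argument proceeds by first finding a Wirtinger presentation of the link group of $D(p,q)$ and applying Fox calculus,'' which is precisely the strategy you outline. Your sketch of how the $\epsilon_i$ arise from the Schubert normal form and how the Fox derivative telescopes to the stated sum is accurate and fills in exactly the bookkeeping the paper omits.
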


Let us briefly discuss the signs in Equation \ref{eq:alex_using_epsilon_sequence}. Recall the sign-alternation operator $\varsigma$ for Laurent polynomials from Section \ref{sec:introduction}, mapping $t^n \mapsto (-1)^n t^n$. The operator $\varsigma$ reverses the sign of every other coefficient of the Laurent polynomial. Now let $n = \sum_{i=0}^k \epsilon_i$. Clearly, $k \equiv n \Mod{2}$ since $\epsilon_i = \pm 1$.  Therefore, $(-1)^k t^n = (-1)^n t^n$ and it follows that 
\begin{equation}
	\label{eq:alex_using_epsilon_and_sign_alternation}
	\Delta_{D(p,q)}(t) \doteq \varsigma \left( \sum_{k = 0}^{q-1} t^{\sum_{i=0}^k \epsilon_i} \right) .
\end{equation}

The right side of Equation \ref{eq:alex_using_epsilon_and_sign_alternation} can be interpreted in terms of the polynomial $\Delta_w(t)$ for a Christoffel word $w$. 

\begin{lemma}
	\label{lem:polynomial_via_epsilon_sequences}
	Let $p,q$ be coprime, non-negative integers and let $w$ be the Christoffel word of slope $\frac{p}{q}$. The polynomial $\Delta_w(t)$ is given by
	\begin{equation*}
		\label{eq:christoffel_using_epsilon_sequence}
		\Delta_w(t) = \sum_{k = 0}^{q-1} t^{\sum_{i=0}^k \epsilon_i},
	\end{equation*}
	where $\epsilon_i = (-1)^{\lfloor \frac{ip}{q} \rfloor}$ and $\lfloor x \rfloor$ is the largest integer less than or equal to $x$.
\end{lemma}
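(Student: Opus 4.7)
The plan is to identify each of the $q$ dots in $T_w$ with one of the $q$ terms on the right-hand side. The Christoffel word $w$ of slope $\frac{p}{q}$ contains exactly $q$ letters equal to $x$, so the folded path $F(w)$ has exactly $q$ signed horizontal steps and produces exactly $q$ dots. I will index these $x$-steps by $i = 0, 1, \ldots, q-1$ in the order they appear along the path and show that the dot produced by the $k$-th step has degree exactly $\sum_{i=0}^{k} \epsilon_i$.

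The main geometric input is the claim that the $i$-th $x$-step of the unfolded Christoffel path of slope $\frac{p}{q}$ begins at the lattice point $(i, \lfloor ip/q \rfloor)$. Since $\gcd(p,q) = 1$, the line segment $L$ from $(0,0)$ to $(q,p)$ contains no interior lattice points, so at every $x$-coordinate $1 \le i \le q-1$ the line $L$ passes through a non-integer height $ip/q$. By the defining property of the Christoffel path, it lies below $L$ and bounds a region containing no interior lattice points, which forces the path to reach the $x$-coordinate $i$ at the unique lattice point lying immediately below $L$, namely $(i,\lfloor ip/q\rfloor)$. This is precisely where its $i$-th $x$-step begins.

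By Definition \ref{def:folded_path}, an $x$-step beginning at even height is kept as $x$, and one beginning at odd height is flipped to $-x$. Hence in the folded path $F(w)$, the $i$-th signed horizontal step has signed horizontal displacement exactly
$$\epsilon_i \;=\; (-1)^{\lfloor ip/q\rfloor}.$$
Since the path starts at the origin and $y$-steps do not change the horizontal coordinate, the terminal point of the $k$-th signed horizontal step has horizontal coordinate $\sum_{i=0}^{k}\epsilon_i$, and this is by definition the degree of the corresponding dot. Summing over $k$ via Equation \ref{eq:polynomial_as_dots} gives
$$\Delta_w(t) \;=\; \sum_{d\in T_w} t^{\deg(d)} \;=\; \sum_{k=0}^{q-1} t^{\sum_{i=0}^{k}\epsilon_i}.$$

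The only non-routine step is the geometric identification of the starting height of the $i$-th $x$-step with $\lfloor ip/q\rfloor$; this is a standard and easily verified property of lower Christoffel paths and does not depend on any of the machinery (concatenation formulas, $yy$-reductions, matrix interpretation) built up in Section \ref{sec:alexander_polynomial_log_concave}. Everything else in the argument is immediate unpacking of the definitions of $F(w)$, $\epsilon_i$, and $\Delta_w(t)$.
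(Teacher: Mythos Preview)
Your proof is correct and follows essentially the same route as the paper: both arguments use Equation~\eqref{eq:polynomial_as_dots}, identify the degree of the $k$-th dot as the partial sum $\sum_{i=0}^{k}\epsilon_i$, and reduce this to the fact that the $i$-th $x$-step (0-indexed) of the unfolded Christoffel path starts at height $\lfloor ip/q\rfloor$. The only difference is that the paper obtains this height by citing Theorem~6.12 of \cite{berstelcombinatorics}, whereas you supply a short direct geometric justification; one small imprecision in your phrasing is that the path may visit several heights at $x$-coordinate $i$, so the relevant claim is that the \emph{highest} such point---where the next $x$-step departs---is $(i,\lfloor ip/q\rfloor)$, which indeed follows from the no-interior-lattice-point condition.
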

\begin{proof}
	We use the formula for $\Delta_w(t)$ given in Equation \ref{eq:polynomial_as_dots}, $$\Delta_w(t)= \sum_{d \in T_w} t^{\deg(d)}.$$
	Let $d_1, \dots, d_q$ be the dots contributing to $\Delta_w(t)$. We claim that $\deg(d_k) = \sum_{i=0}^{k-1} \epsilon_i$. For $k=0$, $\deg(d_1) = 1 = (-1)^0 = \epsilon_0$. Note that $\deg(d_{k+1}) = \deg{d_k} \pm 1$, where the sign depends on whether $d_{k+1}$ corresponds to an $x$-step ($+1$) or an $-x$-step ($-1$). According to the construction of the folded path in Definition \ref{def:folded_path}, this is determined by the parity of the count of $y$-steps preceeding $d_{k+1}$, i.e. the count of occurences of $y$ preceeding the $(k+1)$-th occurence of $x$. The claim and thus the lemma now follow from Theorem 6.12 of \cite{berstelcombinatorics}, which states this count is given by 
	\begin{equation*}
		\left |\{jq~|~ j = 1, \dots, p-1 \mbox { satisfying } jq < (k+1)p \}\right | = \left \lfloor \frac{(k+1)p}{q} \right \rfloor. \qedhere
	\end{equation*}
\end{proof}
Comparing Lemma \ref{lem:polynomial_via_epsilon_sequences} to Fact \ref{fact:minkus-alexander}, we deduce the following theorem and the strong Fox conjecture for two-bridge knots as corollary thereof.

\begin{theorem}
	\label{thm:two_bridge_alex_is_delta_of_christoffel}
	Let $p,q$ be coprime integers satisfying $p$ odd and $0 < p < q$. If $L = D(p,q)$ and if $w$ is the Christoffel word of slope $\frac{p}{q}$, then the Alexander polynomial $\Delta_L(t)$ satisfies
	$$\Delta_L(t) \doteq \varsigma( \Delta_w(t)).$$ \qedhere 
\end{theorem}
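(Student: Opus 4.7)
The proof will be essentially a one-line deduction, since all the serious combinatorial content has already been packaged into Lemma \ref{lem:polynomial_via_epsilon_sequences} and Fact \ref{fact:minkus-alexander}. The plan is simply to chain these together, with the sign-alternation operator $\varsigma$ serving as the bridge.

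Concretely, I would start from Minkus's formula in Fact \ref{fact:minkus-alexander},
$$\Delta_{D(p,q)}(t) \doteq \sum_{k=0}^{q-1} (-1)^k t^{\sum_{i=0}^k \epsilon_i},$$
and invoke the parity observation made in the text: since every $\epsilon_i \in \{\pm 1\}$, we have $\sum_{i=0}^k \epsilon_i \equiv k \pmod 2$, so $(-1)^k = (-1)^{\sum_{i=0}^k \epsilon_i}$. This converts the signs in Minkus's sum into precisely the action of $\varsigma$ term by term, yielding Equation \ref{eq:alex_using_epsilon_and_sign_alternation}:
$$\Delta_{D(p,q)}(t) \doteq \varsigma\!\left( \sum_{k=0}^{q-1} t^{\sum_{i=0}^k \epsilon_i} \right).$$
Then I would apply Lemma \ref{lem:polynomial_via_epsilon_sequences} directly, which identifies the unsigned sum on the right as $\Delta_w(t)$ for $w$ the Christoffel word of slope $p/q$. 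Substituting gives $\Delta_L(t) \doteq \varsigma(\Delta_w(t))$, which is the desired conclusion.

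There is no real obstacle at this stage; the work that made the theorem non-trivial has been done in the proof of Lemma \ref{lem:polynomial_via_epsilon_sequences}, where the degrees of the dots on the folded Christoffel path were matched with the partial sums $\sum_{i=0}^{k-1} \epsilon_i$ using the step-parity description of the folding operation and Theorem 6.12 of \cite{berstelcombinatorics} (which counts $y$-steps preceding the $(k+1)$-th $x$-step of the Christoffel word via the floor function $\lfloor (k+1)p/q \rfloor$). Once that alignment is in place, the theorem reduces to formal bookkeeping with $\varsigma$ and the balance relation $\doteq$, so the proof will be very short, amounting to two substitutions.
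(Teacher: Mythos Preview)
Your proposal is correct and matches the paper's approach exactly: the paper simply states that the theorem follows by ``comparing Lemma \ref{lem:polynomial_via_epsilon_sequences} to Fact \ref{fact:minkus-alexander}'' and does not even write out a separate proof. Your chain --- Minkus's formula, the parity identity $(-1)^k = (-1)^{\sum_{i=0}^k \epsilon_i}$ giving Equation \ref{eq:alex_using_epsilon_and_sign_alternation}, then substitution of Lemma \ref{lem:polynomial_via_epsilon_sequences} --- is precisely the intended deduction.
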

\begin{cor}
	\label{cor:strong_fox_for_two_bridge}
	The strong Fox conjecture holds for two-bridge knots.
\end{cor}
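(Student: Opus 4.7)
The plan is to deduce the corollary as an immediate consequence of the two theorems stated just before it, namely Theorem~\ref{thm:two_bridge_alex_is_delta_of_christoffel} and Theorem~\ref{thm:maintheorem}. Fix a two-bridge knot $K$. By Schubert's classification there exist coprime integers $p,q$ with $p$ odd and $0<p<q$ so that $K = D(p,q)$; moreover $q$ must be odd, since $D(p,q)$ is a knot exactly in that case. Let $w$ be the Christoffel word of slope $\frac{p}{q}$.

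The first step is to write down the identity $\Delta_K(t) \doteq \varsigma(\Delta_w(t))$ supplied by Theorem~\ref{thm:two_bridge_alex_is_delta_of_christoffel}. The second step is to observe that the strong Fox conjecture is a statement about the sequence of \emph{absolute values} of the coefficients of the Alexander polynomial, and that this sequence is invariant under each of the two operations that appear on the right-hand side. The sign-alternation operator $\varsigma$ sends $t^n$ to $(-1)^n t^n$, so it changes only the signs of the coefficients and preserves their absolute values. The balance relation $\doteq$ corresponds to multiplication by a unit $\pm t^n$ in $\mathbb{Z}[t,t^{-1}]$, which merely shifts the indexing and flips all signs simultaneously, again preserving the sequence of absolute values. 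Consequently the absolute-value sequence of $\Delta_K(t)$ coincides with the coefficient sequence of $\Delta_w(t)$, which is already non-negative.

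The third and final step invokes Theorem~\ref{thm:maintheorem}, which states that $\Delta_w(t)$ is log-concave for every Christoffel word $w$. Combining this with the preceding identification of sequences yields that the sequence $(|a_i|)$ of coefficient absolute values of $\Delta_K(t)$ is log-concave, which is exactly the strong Fox conjecture for $K$.

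There is no genuine obstacle here beyond bookkeeping: the only subtlety worth flagging is the parity of $q$ (ensuring $D(p,q)$ is a knot rather than a 2-component link) and the verification that neither $\varsigma$ nor the $\doteq$-ambiguity can disturb log-concavity of the absolute-value sequence. Both are immediate from the definitions, so the corollary follows in a few lines.
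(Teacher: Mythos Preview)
Your proposal is correct and follows essentially the same route as the paper's own proof: invoke Theorem~\ref{thm:two_bridge_alex_is_delta_of_christoffel} to identify $\Delta_K(t)$ up to $\doteq$ with $\varsigma(\Delta_w(t))$, note that both $\varsigma$ and the balance relation leave the absolute-value coefficient sequence unchanged (up to a shift), and then apply Theorem~\ref{thm:maintheorem}. The only difference is cosmetic---you spell out the parity of $q$, which the paper leaves implicit.
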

\begin{proof}
	Let $K = D(p,q)$ be the two-bridge knot and $w$ the Christoffel word of slope $\frac{p}{q}$. Note that if $f(t) \doteq g(t)$ then $f(t)$ is log-concave if and only if $g(t)$ is log-concave, as the coefficient sequences of these polynomials are shifts of one another. Further, the sequence of absolute values of the coefficients of $\Delta_K(t) = \varsigma(\Delta_w(t))$ is the coefficient sequence of $\Delta_w(t)$. Theorem \ref{thm:maintheorem} asserts that $\Delta_w(t)$ is log-concave; thus by definition its coefficient sequence is log-concave.
\end{proof}

It is a classic fact that the two-bridge knots $D(p,q)$ and $D(p',q')$ are equivalent if and only if $q' = q$ and $p' = p^{\pm 1} \mod{q}$. This has the following interesting consequence for the polynomial $\Delta_w(t)$.

\begin{theorem}
	\label{thm:equivalent_delta_polynomials}
	Let $p < q$ be coprime odd integers, and suppose that $p'$ is also odd, where $p'$ is the standard representative of $p^{-1} \in \mathbb{Z}_q$. Let $w$ be the Christoffel word of slope $\frac{p}{q}$ and $w'$ the Christoffel word of slope $\frac{p'}{q}$. Then $\Delta_w(t) = \Delta_{w'}(t)$.
\end{theorem}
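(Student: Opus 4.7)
The plan is to combine the knot-theoretic equivalence $D(p,q) \cong D(p^{-1},q)$, recalled immediately before the theorem, with a palindromic symmetry of $\Delta_w(t)$ whose centre we can make completely explicit. From this equivalence and Theorem \ref{thm:two_bridge_alex_is_delta_of_christoffel}, one immediately deduces $\Delta_w(t) \doteq \Delta_{w'}(t)$, and since both polynomials have strictly positive coefficients this balance relation refines to $\Delta_w(t) = t^n \Delta_{w'}(t)$ for a single integer $n$. The task therefore reduces to showing $n=0$.

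To pin down $n$, I would first establish a palindromic symmetry of $\Delta_w(t)$. Write $\Delta_w(t) = \sum_{k=0}^{q-1} t^{s_k}$ with $s_k := \sum_{i=0}^{k} \epsilon_i$ as in Lemma \ref{lem:polynomial_via_epsilon_sequences}. Since $0 < i < q$ and $\gcd(p,q)=1$ imply that $ip/q$ is not an integer, one has $\lfloor (q-i)p/q \rfloor = p - 1 - \lfloor ip/q \rfloor$, and the hypothesis that $p$ is odd then gives the palindromy $\epsilon_{q-i} = \epsilon_i$ for $0 < i < q$. A short pairing argument applied to the partial sums shows $s_k + s_{q-1-k} = 1 + s_{q-1}$ for every $k$, which is equivalent to the identity
\[
\Delta_w(t) = t^{N(p)} \Delta_w(t^{-1}), \qquad N(p) := 1 + \sum_{i=0}^{q-1} (-1)^{\lfloor ip/q \rfloor}.
\]
The analogous identity holds for $\Delta_{w'}(t)$ with centre $N(p')$.

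The main technical step is then to verify $N(p) = N(p')$. Performing the bijection $j \mapsto i = jp' \bmod q$ of $\{0,\dots,q-1\}$ and writing $pp' = 1 + mq$, a direct computation yields $\lfloor ip/q \rfloor = jm - p \lfloor jp'/q \rfloor$. Since $p, p', q$ are all odd, $pp'-1$ is even while $q$ is odd, so $m$ is even; combined with $p$ odd this forces $(-1)^{\lfloor ip/q \rfloor} = (-1)^{\lfloor jp'/q \rfloor}$, and summing over $j$ gives $N(p) = N(p')$. Comparing the palindromies for $\Delta_w$ and $\Delta_{w'}$ with the relation $\Delta_w(t) = t^n \Delta_{w'}(t)$ now forces $2n = N(p) - N(p') = 0$, so $n=0$ and $\Delta_w(t) = \Delta_{w'}(t)$.

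The main obstacle in this approach is the arithmetic identity $N(p) = N(p')$. The essential input is the parity of $m = (pp'-1)/q$, where the hypothesis that $p'$ is odd (so that $p, p', q$ are all odd) is used crucially; without it, the change of variables would produce an extra sign $(-1)^{jm}$ that need not disappear, and the palindromic centres would in general no longer match.
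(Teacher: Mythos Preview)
Your proof is correct and follows essentially the same route as the paper's: both start from the knot-theoretic equivalence to obtain $\Delta_w(t)=t^n\Delta_{w'}(t)$, both establish the palindromic symmetry of $\Delta_w$ with explicit centre (your $N(p)$ is precisely the paper's constant $c=1+\deg(d_q)$), and both show the centres coincide by a bijection of $\mathbb{Z}_q$ given by multiplication by $p$ (or $p'$) together with the parity observation that $(pp'-1)/q$ is even. Your write-up is in fact a little cleaner: your symmetry $\epsilon_{q-i}=\epsilon_i$ is the correct one (the paper's $\epsilon_{q-1-i}$ is an indexing slip), and your deduction $2n=N(p)-N(p')=0$ makes explicit the step the paper compresses into ``$c=c'$ and therefore $n=0$''.
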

\begin{proof}
	Let $K = D(p,q) = D(p',q)$. The Alexander polynomial is a knot invariant, so by Theorem \ref{thm:two_bridge_alex_is_delta_of_christoffel}
	$$\Delta_w(t) \doteq \varsigma(\Delta_K(t)) \doteq \Delta_{w'}(t).$$
	Therefore $\Delta_w(t) = t^{n} \Delta_{w'}(t)$. It remains to show that $n=1$. Let $d_1, \dots, d_{q}$ and $d'_1, \dots, d'_q$ be the dots contributing to $\Delta_w(t)$ and $\Delta_{w'}(t)$, respectively. According to the proof of Lemma \ref{lem:polynomial_via_epsilon_sequences}, $\deg(d_k) = \sum_{i=0}^{k-1} \epsilon_i$. Since $p$ and $p'$ are odd, the numbers $\epsilon_i$ satisfy the symmetry $\epsilon_i = \epsilon_{q-1-i}$. 
	Thus there are constants $c,c'$ given by $$c = \deg(d_i) + \deg(d_{q+1-i}) \mbox{ and } c' = \deg(d'_i) + \deg(d'_{q+1-i}.)$$ We argue that $\deg(d_q) = \deg(d'_q)$. This implies $c = c'$ and therefore $n=1$. Define a map $$f : \{ip~|~i=0,\dots,q-1\} \mapsto \{ip'~|~i=0,\dots,q-1\}$$ as follows: Write $ipp' = l(p'q) + r$ for $0 \leq r < p'q$ and map $ip \mapsto r$. It is standard to verify that $f$ is well-defined and bijective. Note that $\left\lfloor \frac{ipp'}{p'q} \right\rfloor = l$. We argue that
	\begin{equation}
		\label{eq:proof_of_equivalent_delta}
		\epsilon_i = (-1)^{\left\lfloor \frac{ip}{q} \right\rfloor} = (-1)^{\left\lfloor \frac{ipp'}{p'q} \right\rfloor} = (-1)^{\left\lfloor \frac{r}{q} \right\rfloor} = \epsilon'_{f(i)},
	\end{equation}
	which immediately implies that
	$$\deg(d_q) = \sum_{i=0}^{q-1} \epsilon_i = \sum_{i=0}^{q-1} \epsilon'_{f(i)} = \deg(d'_q).$$
	The only non-trivial equality in Equation \ref{eq:proof_of_equivalent_delta} is the second to last. By assumption, $pp' = 1 + kq$. As $p,p',q$ are odd, $k$ is even. Now, $l(p'q) + r = ipp' = i(kq + 1) = ikq + i$, and therefore $r = ikq - lp'q + i = q(ik - lp') + i$ for $0 \leq i < q$. In particular, $\lfloor \frac{r}{q} \rfloor = ik - lp'$. Observe that $l \equiv ik - lp' \mod{2}$, as $k$ is even and $p'$ is odd. This concludes the proof as
	\begin{equation*}
		(-1)^{\left\lfloor \frac{ipp'}{p'q} \right\rfloor} = (-1)^l = (-1)^{ ik - lp'} = (-1)^{\left\lfloor \frac{r}{q} \right\rfloor}. \qedhere
	\end{equation*}
\end{proof}
\section*{Acknowledgments}
The author would like to thank Jim Hoste, Pierre-Vincent Koseleff and Christophe Reutenauer for insightful comments and suggestions.

\bibliographystyle{halpha}
\bibliography{Alexander_Polynomial_Christoffel_words}

\end{document}